\newcommand{\mcal}[1]{\mathcal{#1}}
\newcommand{\mb}[1]{\mathbb{#1}}
\newcommand{\pl}{\partial}
\newtheorem{theo}{Theorem}
\newcommand{\argmin}{\mathop{\rm argmin}\limits}
\title{\LARGE \bf
Mean-Field Control Approach to Decentralized Stochastic Control with Finite-Dimensional Memories
}
\author{Takehiro Tottori$^{1}$ and Tetsuya J. Kobayashi$^{2}$
\thanks{*The first author received a JSPS Research Fellowship (Grant No. 21J20436). This work was supported by JSPS KAKENHI (Grant No. 19H05799) and JST CREST (Grant No. JPMJCR2011).}
\thanks{$^{1}$Takehiro Tottori is with 
	Department of Mathematical Informatics, Graduate School of Information Science and Technology, The University of Tokyo, Tokyo 113-8654, Japan
        {\tt\small takehiro\_tottori@sat.t.u-tokyo.ac.jp}}%
\thanks{$^{2}$Tetsuya J. Kobayashi is with Institute of Industrial Science, The University of Tokyo, Tokyo 153-8455, Japan
        {\tt\small tetsuya@sat.t.u-tokyo.ac.jp}}%
}
\begin{document}

\maketitle
\thispagestyle{empty}
\pagestyle{empty}

\begin{abstract}
Decentralized stochastic control (DSC) considers the optimal control problem of a multi-agent system. However, DSC cannot be solved except in the special cases because the estimation among the agents is generally intractable. In this work, we propose memory-limited DSC (ML-DSC), in which each agent compresses the observation history into the finite-dimensional memory. Because this compression simplifies the estimation among the agents, ML-DSC can be solved in more general cases based on the mean-field control theory. We demonstrate ML-DSC in the general LQG problem. Because estimation and control are not clearly separated in the general LQG problem, the Riccati equation is modified to the decentralized Riccati equation, which improves estimation as well as control. Our numerical experiment shows that the decentralized Riccati equation is superior to the conventional Riccati equation. 
\end{abstract}

\section{INTRODUCTION}
Control problems of multi-agent systems have many practical applications including real-time communication \cite{mahajan_optimal_2009}, decentralized detection \cite{nayyar_sequential_2011}, and networked control \cite{mahajan_optimal_2009-1}. 

Decentralized stochastic control (DSC) is a conventional theoretical framework that considers the optimal control problem of a multi-agent system \cite{nayyar_decentralized_2013,charalambous_centralized_2017,charalambous_centralized_2018}. 
DSC consists of a system and multiple controllers. 
Because each controller cannot completely observe the state of the system and the controls of the other controllers, it determines the control based on the noisy observation history. 

In order to obtain the optimal control, each controller needs to estimate the state of the system and the observation histories of the other controllers from its own observation history.  
Although the estimation of the state of the system can be accomplished by the sequential Bayesian filtering \cite{bensoussan_stochastic_1992,nisio_stochastic_2015}, that of the observation histories of the other controllers is generally intractable. 
As a result, the conventional DSC cannot be solved except in the special cases. 

In order to address this problem, we propose an alternative theoretical framework to DSC, which can be solved in more general cases. 
We call it memory-limited DSC (ML-DSC), in which each controller compresses the observation history into the finite-dimensional memory. 
Because this compression simplifies the estimation among the controllers, ML-DSC is more tractable than the conventional DSC. 

ML-DSC can be solved by employing the mathematical technique of the mean-field control theory \cite{bensoussan_master_2015,bensoussan_interpretation_2017,tottori_mean-field_2022}. 
We show that the optimal control function of ML-DSC is obtained by jointly solving the Fokker-Planck (FP) equation and the Hamilton-Jacobi-Bellman (HJB) equation. 
The system of HJB-FP equations also appears in the mean-field game and control \cite{bensoussan_master_2015,bensoussan_interpretation_2017}, and numerous numerical algorithms have been developed \cite{lauriere_numerical_2021}. 
Therefore, unlike the conventional DSC, ML-DSC can be solved in more general cases by using these algorithms.  

ML-DSC is the extension of the finite-state controller \cite{bernstein_bounded_2005,oliehoek_concise_2016,tottori_forward_2021} from the discrete setting to the continuous setting. 
However, it is difficult to extend the algorithms of the finite-state controller to our setting because they strongly depend on discreteness. 
We resolve this problem by using the trick of the mean-field control theory. 

ML-DSC is also the extension of memory-limited partially observable stochastic control (ML-POSC) \cite{tottori_mean-field_2022} from a single-agent system to a multi-agent system. 
The conventional POSC approach  \cite{bensoussan_stochastic_1992,nisio_stochastic_2015} cannot be extended to the conventional DSC because the estimation among the controllers is much difficult. 
In contrast, ML-POSC approach can be straightforwardly extended to ML-DSC because the compression of the observation histories into the finite-dimensional memories simplifies the estimation among the controllers. 

We demonstrate how ML-DSC works by applying it to the Linear-Quadratic-Gaussian (LQG) problem. 
The conventional DSC can be solved in the special LQG problems where the controllers have no information about the other controllers \cite{charalambous_centralized_2017,charalambous_centralized_2018}, or where the controllers have a nested structure \cite{lessard_optimal_2012,lessard_structural_2013}. 
In contrast, ML-DSC can be solved in a more general LQG problem involving a non-nested structure. 
Because estimation and control are not clearly separated in the general LQG problem, the Riccati equation for control is modified to include estimation, which is called the decentralized Riccati equation in this paper. 
We demonstrate that the decentralized Riccati equation is superior to the conventional Riccati equation in the general LQG problem. 

This paper is organized as follows: 
In Sec. \ref{sec: Review of DSC}, we briefly review the conventional DSC. 
In Sec. \ref{sec: ML-DSC}, we formulate ML-DSC. 
In Sec. \ref{sec: MFC}, we solve ML-DSC based on the mean-field control theory. 
In Sec. \ref{sec: Application to LQG problem}, we apply ML-DSC to the LQG problem.   
In Sec. \ref{sec: Conclusion}, we conclude this paper. 

\section{REVIEW OF DECENTRALIZED STOCHASTIC CONTROL}\label{sec: Review of DSC}
In this section, we briefly review the conventional DSC \cite{charalambous_centralized_2017,charalambous_centralized_2018}. 
DSC consists of a system and $N$ controllers.  
$x_{t}\in\mb{R}^{d_{x}}$ is the state of the system at time $t\in[0,T]$, which evolves by the following stochastic differential equation (SDE): 
\begin{align}
	dx_{t}&=b(t,x_{t},u_{t})dt+\sigma(t,x_{t},u_{t})d\omega_{t},\label{eq: state SDE}
\end{align}
where $x_{0}$ obeys $p_{0}(x_{0})$, $\omega_{t}\in\mb{R}^{d_{\omega}}$ is the standard Wiener process, $u_{t}^{i}\in\mb{R}^{d_{u}^{i}}$ is the control of the controller $i\in\{1,...,N\}$, and $u_{t}:=(u_{t}^{1},u_{t}^{2},...,u_{t}^{N})$ is the joint control of $N$ controllers. 

In DSC, because the controller $i$ cannot completely observe the state $x_{t}$ and the joint control $u_{t}$, 
the controller $i$ obtains the observation $y_{t}^{i}\in\mb{R}^{d_{y}^{i}}$ instead of them, which evolves by the following SDE: 
\begin{align}
	dy_{t}^{i}&=h^{i}(t,x_{t},u_{t})dt+\gamma^{i}(t,x_{t},u_{t})d\nu_{t}^{i},\label{eq: observation SDE}
\end{align}
where $y_{0}^{i}$ obeys $p_{0}^{i}(y_{0}^{i})$, and $\nu_{t}^{i}\in\mb{R}^{d_{\nu}^{i}}$ is the standard Wiener process. 

The controller $i$ determines the control $u_{t}^{i}$ based on the observation history $y_{0:t}^{i}:=\{y_{\tau}^{i}|\tau\in[0,t]\}$ as follows: 
\begin{align}
	u_{t}^{i}=u^{i}(t,y_{0:t}^{i}).\label{eq: control of DSC}
\end{align}

The objective function of DSC is given by the following expected cumulative cost function: 
\begin{align}	
	J[u]:=\mb{E}_{u}\left[\int_{0}^{T}f(t,x_{t},u_{t})dt+g(x_{T})\right],
	\label{eq: OF of DSC}
\end{align}
where $f$ is the cost function, and $g$ is the terminal cost function. 
DSC is the problem to find the optimal control function $u^{*}$ that minimizes the objective function $J[u]$ as follows: 
\begin{align}	
	u^{*}:=\argmin_{u}J[u].
	\label{eq: DSC}
\end{align}

In order to obtain the optimal control function $u^{*}$, the controller $i$ needs to estimate the state of the system $x_{t}$ and the observation histories of the other controllers $y_{0:t}^{j}\ (j\neq i)$ from its own observation history $y_{0:t}^{i}$, which is generally intractable. 
As a result, the conventional DSC cannot be solved except in the special cases. 

\section{MEMORY-LIMITED DECENTRALIZED STOCHASTIC CONTROL}\label{sec: ML-DSC}
In order to address this problem, we propose an alternative theoretical framework to the conventional DSC, ML-DSC. 
In this section, we formulate ML-DSC. 

\subsection{Problem formulation}
In this subsection, we formulate ML-DSC. 
In ML-DSC, the controller $i$ determines the control $u_{t}^{i}$ based on the finite-dimensional memory $z_{t}^{i}\in\mb{R}^{d_{z}^{i}}$ as follows: 
\begin{align}
	u_{t}^{i}=u^{i}(t,z_{t}^{i}).\label{eq: control of ML-DSC}
\end{align}
$d_{z}^{i}$ is determined by the dimension of the memory available to the controller $i$. 
Comparing (\ref{eq: control of DSC}) and (\ref{eq: control of ML-DSC}), the memory $z_{t}^{i}$ can be interpreted as the compression of the observation history $y_{0:t}^{i}$. 
Because this compression simplifies the estimation among the controllers, ML-DSC is more tractable than the conventional DSC.

The memory $z_{t}^{i}$ is assumed to evolve by
\begin{align}
	dz_{t}^{i}=c^{i}(t,z_{t}^{i},v_{t}^{i})dt+\kappa^{i}(t,z_{t}^{i},v_{t}^{i})dy_{t}^{i}, 
	\label{eq: memory SDE}
\end{align}
where $z_{0}^{i}$ obeys $p_{0}^{i}(z_{0}^{i})$, and $v_{t}^{i}=v^{i}(t,z_{t}^{i})\in\mb{R}^{d_{v}^{i}}$ is the control. 
Because (\ref{eq: memory SDE}) depends on the observation $dy_{t}^{i}$, the observation history $y_{0:t}^{i}$ can be compressed into the memory $z_{t}^{i}$. 
Furthermore, because (\ref{eq: memory SDE}) depends on the control $v_{t}^{i}$, the memory $z_{t}^{i}$ can be optimized through the control $v_{t}^{i}$, which can improve the estimation. 
We note that (\ref{eq: memory SDE}) can be extended to include the intrinsic stochasticity \cite{tottori_mean-field_2022}. 

The objective function of ML-DSC is given by the following expected cumulative cost function:  
\begin{align}
	J[u,v]:=\mb{E}_{u,v}\left[\int_{0}^{T}f(t,x_{t},u_{t},v_{t})dt+g(x_{T})\right]. 
	\label{eq: OF of ML-DSC}
\end{align}
Because the cost function $f$ depends on the memory control $v_{t}$ as well as the state control $u_{t}$, ML-DSC can consider the memory control cost (estimation cost) as well as the state control cost (control cost) \cite{tottori_mean-field_2022}. 
In the light of the dualistic roles played by estimation and control, it is natural to consider the estimation cost as well as the control cost. 

ML-DSC optimizes the state control function $u$ and the memory control function $v$ based on the objective function $J[u,v]$ as follows: 
\begin{align}
	u^{*},v^{*}:=\argmin_{u,v}J[u,v].
	\label{eq: ML-DSC}
\end{align}

\subsection{Problem reformulation}
Although the formulation of ML-DSC in the previous subsection clarifies its  relationship with the conventional DSC, it is inconvenient for further mathematical investigations. 
In order to resolve this problem, we reformulate ML-DSC in this subsection. 
This formulation is simpler and more general than the previous one. 

We first define the extended state $s_{t}$ as follows: 
\begin{align}
	s_{t}:=\left(\begin{array}{c}
		x_{t}\\ 
		z_{t}^{1}\\
		\vdots\\
		z_{t}^{N}\\
	\end{array}\right)\in\mb{R}^{d_{s}}, 
\end{align}
where $d_{s}=d_{x}+\sum_{i=1}^{N}d_{z}^{i}$. 
The extended state $s_{t}$ evolves by the following SDE: 
\begin{align}
	ds_{t}=\tilde{b}(t,s_{t},\tilde{u}_{t})dt+\tilde{\sigma}(t,s_{t},\tilde{u}_{t})d\tilde{\omega}_{t}, 
	\label{eq: extended state SDE}
\end{align}
where $s_{0}$ obeys $p_{0}(s_{0})$, $\tilde{\omega}_{t}\in\mb{R}^{d_{\tilde{\omega}}}$ is the standard Wiener process, $\tilde{u}_{t}^{i}\in\mb{R}^{d_{\tilde{u}}^{i}}$ is the control of the controller $i$, and $\tilde{u}_{t}:=(\tilde{u}_{t}^{1},\tilde{u}_{t}^{2},...,\tilde{u}_{t}^{N})$ is the joint control of $N$ controllers. 
In ML-DSC, the controller $i$ determines the control $\tilde{u}_{t}^{i}$ based on the memory $z_{t}^{i}$ as follows: 
\begin{align}
	\tilde{u}_{t}^{i}=\tilde{u}^{i}(t,z_{t}^{i}).\label{eq: control of GML-DSC}
\end{align}
The extended state SDE (\ref{eq: extended state SDE}) includes the previous state, observation, and memory SDEs (\ref{eq: state SDE}), (\ref{eq: observation SDE}), (\ref{eq: memory SDE}) as a special case because they can be represented as follows: 
\begin{align}
	&ds_{t}=\left(\begin{array}{c}
		b\\
		c^{1}+\kappa^{1}h^{1}\\
		\vdots\\
		c^{N}+\kappa^{N}h^{N}\\
	\end{array}\right)dt\nonumber\\
	&+\left(\begin{array}{cccc}
		\sigma&O&\cdots&O\\
		O&\kappa^{1}\gamma^{1}&\cdots&O\\
		\vdots&\vdots&\ddots&\vdots\\
		O&O&\cdots&\kappa^{N}\gamma^{N}\\
	\end{array}\right)
	\left(\begin{array}{c}
		d\omega_{t}\\
		d\nu_{t}^{1}\\
		\vdots\\
		d\nu_{t}^{N}\\
	\end{array}\right),
\end{align}
where $p_{0}(s_{0})=p_{0}(x_{0})p_{0}(z_{0})$. 

The objective function of ML-DSC is given by the following expected cumulative cost function: 
\begin{align}	
	J[\tilde{u}]:=\mb{E}_{\tilde{u}}\left[\int_{0}^{T}\tilde{f}(t,s_{t},\tilde{u}_{t})dt+\tilde{g}(s_{T})\right],
	\label{eq: OF of GML-DSC}
\end{align}
where $\tilde{f}$ is the cost function, and $\tilde{g}$ is the terminal cost function.  
It is obvious that this objective function (\ref{eq: OF of GML-DSC}) is more general than that in the previous one (\ref{eq: OF of ML-DSC}). 

ML-DSC is the problem to find the optimal control function $\tilde{u}^{*}$ that minimizes the objective function $J[\tilde{u}]$ as follows: 
\begin{align}	
	\tilde{u}^{*}:=\argmin_{u}J[\tilde{u}].
	\label{eq: GML-DSC}
\end{align}

In the following section, we mainly consider the formulation of this subsection rather than that of the previous subsection because it is simpler and more general. 
Moreover, we omit $\tilde{\cdot}$ for the notational simplicity.

\section{MEAN-FIELD CONTROL APPROACH}\label{sec: MFC}
If the control $u_{t}^{i}$ is determined based on the extended state $s_{t}$, i.e., $u_{t}^{i}=u^{i}(t,s_{t})$, ML-DSC is the same with the completely observable stochastic control (COSC) of the extended state, and it can be solved by the conventional COSC approach \cite{yong_stochastic_1999}. 
However, because ML-DSC determines the control $u_{t}^{i}$ based solely on the memory $z_{t}^{i}$, i.e., $u_{t}^{i}=u^{i}(t,z_{t}^{i})$, ML-DSC cannot be approached in the similar way as COSC. 
In this section, we propose the mean-field control approach  \cite{tottori_mean-field_2022} to ML-DSC. 

\subsection{Derivation of optimal control function}
In this subsection, we solve ML-DSC based on the mean-field control theory \cite{tottori_mean-field_2022}. 
We first show that ML-DSC can be converted into a deterministic control of the probability density function.
The extended state SDE (\ref{eq: extended state SDE}) can be converted into the following Fokker-Planck (FP) equation: 
\begin{align}
	\frac{\pl p_{t}(s)}{\pl t}=\mcal{L}^{\dagger}p_{t}(s), 
	\label{eq: FP eq}
\end{align}
where the initial condition is given by $p_{0}(s)$, and $\mcal{L}^{\dag}$ is the forward diffusion operator, which is defined by
\begin{align}
	\mcal{L}^{\dag}p(s)&:=-\sum_{i=1}^{d_{s}}\frac{\pl (b_{i}(t,s,u)p(s))}{\pl s_{i}}\nonumber\\
	&+\frac{1}{2}\sum_{i,j=1}^{d_{s}}\frac{\pl^{2}(D_{ij}(t,s,u)p(s))}{\pl s_{i}\pl s_{j}}, \nonumber
\end{align}
where $D(t,s,u):=\sigma(t,s,u)\sigma^{T}(t,s,u)$. 
The objective function of ML-DSC (\ref{eq: OF of GML-DSC}) can be calculated as follows: 
\begin{align}
	J[u]=\int_{0}^{T}\bar{f}(t,p_{t},u_{t})dt+\bar{g}(p_{T}), 
	\label{eq: OF of GML-DSC FP}
\end{align}
where $\bar{f}(t,p,u):=\mb{E}_{p(s)}[f(t,s,u)]$ and $\bar{g}(p):=\mb{E}_{p(s)}[g(s)]$. 
From (\ref{eq: FP eq}) and (\ref{eq: OF of GML-DSC FP}), ML-DSC is converted into a deterministic control of $p_{t}$. 
As a result, ML-DSC can be approached in the similar way as the deterministic control. 

\begin{theo}
\label{theo: optimal control of GML-DSC based on Bellman eq}
The optimal control function of ML-DSC is given by 
\begin{align}
	&u^{i*}(t,z^{i})=\argmin_{u^{i}}\nonumber\\
	&\mb{E}_{p_{t}(s^{-i}|z^{i})}\left[H\left(t,s,(u^{-i*},u^{i}),\frac{\delta V(t,p_{t})}{\delta p}(s)\right)\right],
	\label{eq: optimal control of GML-DSC based on Bellman eq}
\end{align}
where $s^{-i}$ and $(u^{-i*},u^{i})$ are defined by 
\begin{align}
	s^{-i}&:=(x,z^{1},...,z^{i-1},z^{i+1},...,z^{N}),\nonumber\\
	(u^{-i*},u^{i})&:=(u^{1*},...,u^{i-1*},u^{i},u^{i+1*},...,u^{N*}), \nonumber
\end{align}
and $H$ is the Hamiltonian, which is defined by
\begin{align}
	H\left(t,s,u^{*},\frac{\delta V(t,p)}{\delta p}(s)\right):=f(t,s,u)+\mcal{L}\frac{\delta V(t,p)}{\delta p}(s),\nonumber
\end{align}
where $\mcal{L}$ is the backward diffusion operator, which is defined by
\begin{align}
	\mcal{L}p(s)&:=\sum_{i=1}^{d_{s}}b_{i}(t,s,u)\frac{\pl p(s)}{\pl s_{i}}
	+\frac{1}{2}\sum_{i,j=1}^{d_{s}}D_{ij}(t,s,u)\frac{\pl^{2} p(s)}{\pl s_{i}\pl s_{j}}.\nonumber
\end{align}
We note that $\mcal{L}$ is the conjugate of $\mcal{L}^{\dag}$. 
$p_{t}(s^{-i}|z^{i})=p_{t}(s)/\int p_{t}(s)ds^{-i}$, $p_{t}(s)$ is the solution of the FP equation (\ref{eq: FP eq}), and $V(t,p)$ is the solution of the following Bellman equation: 
\begin{align}
	-\frac{\pl V(t,p)}{\pl t}=\mb{E}_{p(s)}\left[H\left(t,s,u^{*},\frac{\delta V(t,p)}{\delta p}(s)\right)\right], 
	\label{eq: Bellman eq of GML-DSC}
\end{align}
where $V(T,p)=\mb{E}_{p(s)}[g(s)]$. 
\end{theo}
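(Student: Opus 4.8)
The plan is to treat the reformulated ML-DSC as a deterministic optimal control problem whose \emph{state} is the probability density $p_{t}$ evolving by the FP equation (\ref{eq: FP eq}) and whose running and terminal costs are $\bar{f}$ and $\bar{g}$ as in (\ref{eq: OF of GML-DSC FP}), exactly as in the single-agent ML-POSC analysis \cite{tottori_mean-field_2022}. First I would define the value function
\begin{align}
V(t,p):=\inf_{u}\left[\int_{t}^{T}\bar{f}(\tau,p_{\tau},u_{\tau})d\tau+\bar{g}(p_{T})\right],\nonumber
\end{align}
where the infimum is over admissible control functions of the decentralized form $u^{i}_{\tau}=u^{i}(\tau,z^{i})$ and $p_{\tau}$ solves (\ref{eq: FP eq}) with $p_{t}=p$. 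The terminal condition $V(T,p)=\mb{E}_{p(s)}[g(s)]$ then follows immediately from the definition.

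Next I would invoke the dynamic programming principle: for small $h>0$,
\begin{align}
V(t,p)=\inf_{u}\left[\int_{t}^{t+h}\bar{f}(\tau,p_{\tau},u_{\tau})d\tau+V(t+h,p_{t+h})\right].\nonumber
\end{align}
Expanding $V(t+h,p_{t+h})$ to first order requires a chain rule for functionals on the density space; this is where the flat (linear functional) derivative $\frac{\delta V}{\delta p}(s)$ enters, giving
\begin{align}
V(t+h,p_{t+h})=V(t,p)+h\frac{\pl V}{\pl t}+\left(\int\frac{\delta V(t,p)}{\delta p}(s)\,\mcal{L}^{\dagger}p(s)\,ds\right)h+o(h).\nonumber
\end{align}
The crucial simplification is the adjoint relation $\int\frac{\delta V}{\delta p}(s)\mcal{L}^{\dagger}p(s)\,ds=\int p(s)\mcal{L}\frac{\delta V}{\delta p}(s)\,ds=\mb{E}_{p(s)}[\mcal{L}\frac{\delta V}{\delta p}(s)]$ noted in the statement. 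Combining this with $\bar{f}(t,p,u)=\mb{E}_{p(s)}[f(t,s,u)]$ and the definition of $H$, then dividing by $h$ and letting $h\to0$, yields the Bellman equation (\ref{eq: Bellman eq of GML-DSC}) with the infimum attained at $u^{*}$.

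It remains to characterize the minimizer, which is the step that carries the decentralized information structure. Fixing the other controllers' functions $u^{-i}$ and minimizing $\mb{E}_{p(s)}[H]$ over $u^{i}(t,\cdot)$, I would write $s=(z^{i},s^{-i})$ and factor $p(s)=p(z^{i})p_{t}(s^{-i}|z^{i})$, so that the objective becomes $\int p(z^{i})\left(\mb{E}_{p_{t}(s^{-i}|z^{i})}[H]\right)dz^{i}$. Since the value $u^{i}(t,z^{i})$ influences only the inner conditional expectation at that $z^{i}$, the minimization decouples \emph{pointwise} in $z^{i}$ and reduces to $\argmin_{u^{i}}\mb{E}_{p_{t}(s^{-i}|z^{i})}[H]$, which is precisely (\ref{eq: optimal control of GML-DSC based on Bellman eq}). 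Because $H$ depends on the full joint control, the $N$ conditions are mutually coupled, so the optimal controls of the others, $u^{-i*}$, appear on the right-hand side, turning the characterization into a coupled (person-by-person) fixed-point system rather than a single unconstrained minimization.

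The main obstacle I anticipate is twofold. \emph{Analytically}, one must make rigorous the functional chain rule and the existence of the flat derivative $\frac{\delta V}{\delta p}$ on the infinite-dimensional density space, together with the interchange of the limit $h\to0$ and the infimum in the dynamic programming principle. \emph{Structurally}, the decentralization is the subtle point: one has to verify that restricting each $u^{i}$ to depend only on $z^{i}$, rather than on the full extended state $s$ as in COSC, is exactly what converts the pointwise-in-$s$ minimization of standard stochastic control into the conditional-expectation minimization above, and that the resulting coupled stationarity conditions genuinely characterize the optimum rather than merely a critical point.
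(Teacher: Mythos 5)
Your proposal is correct and follows essentially the same route as the paper's proof: define the value function on the space of densities, obtain the functional Bellman equation via dynamic programming with the flat derivative $\frac{\delta V}{\delta p}$ and the adjoint relation between $\mcal{L}^{\dagger}$ and $\mcal{L}$, and then use the fact that $u^{i}$ depends only on $z^{i}$ to factor $p(s)=p(z^{i})p(s^{-i}|z^{i})$ and exchange the minimization with the expectation over $p(z^{i})$, yielding the pointwise conditional-expectation minimization. The paper delegates the Bellman-equation derivation to a citation and does not flag the person-by-person fixed-point caveat you raise, but these are elaborations of, not departures from, its argument.
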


\begin{proof}
The proof is shown in Appendix A. 
\end{proof}

However, because the Bellman equation (\ref{eq: Bellman eq of GML-DSC}) is a functional differential equation, it cannot be solved even numerically. 
We resolve this problem by employing the mathematical technique of the mean-field control theory \cite{bensoussan_master_2015,bensoussan_interpretation_2017,tottori_mean-field_2022}. 
This technique converts Theorem \ref{theo: optimal control of GML-DSC based on Bellman eq} into the following theorem by defining 
\begin{align}
	w(t,s):=\frac{\delta V(t,p_{t})}{\delta p}(s), 
\end{align}
where $p_{t}$ is the solution of the FP equation (\ref{eq: FP eq}). 

\begin{theo}\label{theo: optimal control of GML-DSC}
The optimal control function of ML-DSC is given by
\begin{align}
	&u^{i*}(t,z^{i})=\argmin_{u^{i}}\nonumber\\
	&\mb{E}_{p_{t}(s^{-i}|z^{i})}\left[H\left(t,s,(u^{-i*},u^{i}),w\right)\right],
	\label{eq: optimal control of GML-DSC}
\end{align}
where $p_{t}(s^{-i}|z^{i})=p_{t}(s)/\int p_{t}(s)ds^{-i}$, $p_{t}(s)$ is the solution of the FP equation (\ref{eq: FP eq}), and $w(t,s)$ is the solution of the following Hamilton-Jacobi-Bellman (HJB) equation: 
\begin{align}
	-\frac{\pl w(t,s)}{\pl t}=H\left(t,s,u^{*},w\right),
	\label{eq: HJB eq}
\end{align}
where $w(T,s)=g(s)$. 
\end{theo}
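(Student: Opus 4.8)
The plan is to treat the substitution $w(t,s):=\frac{\delta V(t,p_t)}{\delta p}(s)$, with $p_t$ the solution of the FP equation (\ref{eq: FP eq}), as the bridge between the functional Bellman equation of Theorem~\ref{theo: optimal control of GML-DSC based on Bellman eq} and the HJB equation (\ref{eq: HJB eq}), and to verify the three assertions of the theorem in turn. First, the optimal-control formula (\ref{eq: optimal control of GML-DSC}) is immediate: substituting the definition of $w$ into (\ref{eq: optimal control of GML-DSC based on Bellman eq}) replaces $\frac{\delta V(t,p_t)}{\delta p}(s)$ by $w(t,s)$ verbatim, so nothing must be computed. The terminal condition is equally direct: differentiating $V(T,p)=\mb{E}_{p(s)}[g(s)]=\int g(s)p(s)ds$ with respect to $p$ gives functional derivative $g(s)$, hence $w(T,s)=g(s)$.

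The substance is the HJB equation itself. I would compute $\frac{\pl w}{\pl t}$ by the chain rule for functional derivatives along the curve $t\mapsto p_t$,
\[
	\frac{\pl w(t,s)}{\pl t}=\left.\frac{\pl}{\pl t}\frac{\delta V(t,p)}{\delta p}(s)\right|_{p=p_t}+\int \frac{\delta^{2}V(t,p_t)}{\delta p(s)\delta p(s')}\frac{\pl p_t(s')}{\pl t}ds',
\]
and then replace $\frac{\pl p_t(s')}{\pl t}$ by $\mcal{L}^{\dag}p_t(s')$ using the FP equation (\ref{eq: FP eq}). In parallel, I would take the functional derivative in $p$ of the Bellman equation (\ref{eq: Bellman eq of GML-DSC}). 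Writing its right-hand side as $\int p(s)f(t,s,u^{*})ds+\int \frac{\delta V}{\delta p}(s)\mcal{L}^{\dag}p(s)ds$, where the second integral uses $\mb{E}_{p(s)}[\mcal{L}w]=\int w\,\mcal{L}^{\dag}p\,ds$ via the conjugacy of $\mcal{L}$ and $\mcal{L}^{\dag}$, the product rule yields several groups of terms. The terms in which the derivative lands on the explicit argument $u^{*}(t,p)$ vanish by the envelope theorem, since $u^{*}$ satisfies the first-order optimality condition built into (\ref{eq: optimal control of GML-DSC based on Bellman eq}). The term in which the derivative lands on $\mcal{L}^{\dag}p$ reproduces $\mcal{L}w(s)$ after one further use of the adjoint relation, so that $f+\mcal{L}w=H$ reassembles; the only other surviving piece is the second-derivative term $\int \frac{\delta^{2}V}{\delta p(s)\delta p(s')}\mcal{L}^{\dag}p(s')ds'$.

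The decisive step is to combine the two computations. Differentiating the Bellman equation gives
\[
	-\left.\frac{\pl}{\pl t}\frac{\delta V(t,p)}{\delta p}(s)\right|_{p=p_t}=H(t,s,u^{*},w)+\int \frac{\delta^{2}V(t,p_t)}{\delta p(s)\delta p(s')}\mcal{L}^{\dag}p_t(s')ds',
\]
and substituting this into the chain-rule expression for $\frac{\pl w}{\pl t}$ makes the two second-derivative integrals cancel identically, precisely because both are evaluated at the same argument $p=p_t$. What survives is $-\frac{\pl w}{\pl t}=H(t,s,u^{*},w)$, the asserted HJB equation (\ref{eq: HJB eq}).

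I expect the main obstacle to lie in the bookkeeping of the second functional derivative together with a rigorous justification of the envelope-theorem cancellation of the $u^{*}$-dependent terms, which requires the differentiability of $p\mapsto u^{*}(t,p)$ and the validity of interchanging functional differentiation with the pointwise minimization in (\ref{eq: optimal control of GML-DSC based on Bellman eq}). The cancellation of the second-derivative integrals is the true crux: it is what forces the identity to hold only along the FP flow $p_t$, and not at an arbitrary density $p$, and it is the mechanism by which the functional (infinite-dimensional) Bellman equation collapses to the finite-dimensional HJB equation on $(t,s)$.
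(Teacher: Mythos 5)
Your proposal is correct and follows essentially the same route as the paper, which defers its proof of Theorem~\ref{theo: optimal control of GML-DSC} to the mean-field control technique of \cite{tottori_mean-field_2022}: define $w(t,s)=\frac{\delta V(t,p_{t})}{\delta p}(s)$ along the FP flow, take the functional derivative of the Bellman equation (\ref{eq: Bellman eq of GML-DSC}) with the $u^{*}$-terms vanishing by stationarity, and cancel the second-order functional derivative terms against the chain-rule term generated by $\frac{\pl p_{t}}{\pl t}=\mcal{L}^{\dagger}p_{t}$. Your closing observation—that this cancellation occurs only along the FP trajectory $p_{t}$, which is exactly why the HJB equation (\ref{eq: HJB eq}) remains coupled to the FP equation (\ref{eq: FP eq}) through the optimal control—correctly identifies the crux of the reduction.
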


\begin{proof}
The proof is almost the same with \cite{tottori_mean-field_2022}. 
\end{proof}

While the Bellman equation (\ref{eq: Bellman eq of GML-DSC}) is a functional differential equation, the HJB equation (\ref{eq: HJB eq}) is a partial differential equation, which can be solved numerically. 

The optimal control function of ML-DSC (\ref{eq: optimal control of GML-DSC}) is obtained by jointly solving the FP equation (\ref{eq: FP eq}) and the HJB equation (\ref{eq: HJB eq}). 
The system of HJB-FP equations also appears in the mean-field game and control \cite{bensoussan_master_2015,bensoussan_interpretation_2017}, and numerous numerical algorithms have been developed \cite{lauriere_numerical_2021}. 
As a result, unlike the conventional DSC, ML-DSC can be solved in more general cases by using these algorithms.  

One of the most basic algorithms is the forward-backward sweep method (fixed-point iteration method) \cite{lauriere_numerical_2021,tottori_notitle_2022}, which computes the FP equation (\ref{eq: FP eq}) and the HJB equation (\ref{eq: HJB eq}) alternately. 
While the convergence of the forward-backward sweep method is not guaranteed in the mean-field game and control, it is guaranteed in ML-DSC because the coupling of HJB-FP equations is limited to the optimal control function in ML-DSC \cite{tottori_notitle_2022}.

\subsection{Comparison with completely observable or memory-limited partially observable stochastic control}
The COSC of the extended state and ML-POSC can be solved in the similar way as ML-DSC \cite{tottori_mean-field_2022}. 

In the COSC of the extended state, because the control $u_{t}^{i}$ is determined based on the extended state $s_{t}$, i.e., $u_{t}^{i}=u^{i}(t,s_{t})$, the optimal control function is given by 
\begin{align}
	u^{i*}(t,s)=\argmin_{u^{i}}H\left(t,s,(u^{-i*},u^{i}),w\right).
	\label{eq: optimal control of COSC}
\end{align}

In ML-POSC, because the control $u_{t}^{i}$ is determined based on the joint memory $z_{t}$, i.e., $u_{t}^{i}=u^{i}(t,z_{t})$, the optimal control function is given by 
\begin{align}
	u^{i*}(t,z)=\argmin_{u^{i}}\mb{E}_{p_{t}(x|z)}\left[H\left(t,s,(u^{-i*},u^{i}),w\right)\right]. 
	\label{eq: optimal control of ML-POSC}
\end{align}

Although the HJB equation (\ref{eq: HJB eq}) is the same between COSC, ML-POSC, and ML-DSC, the optimal control function is different. 
Especially, the optimal control functions of ML-POSC and ML-DSC depend on the FP equation (\ref{eq: FP eq}) because they need to estimate unobservables from observables. 

\section{LINEAR-QUADRATIC-GAUSSIAN PROBLEM}\label{sec: Application to LQG problem}
In this section, we demonstrate how ML-DSC works by applying it to the general LQG problem involving a non-nested structure. 

\subsection{Problem formulation}
In this subsection, we formulate the LQG problem \cite{bensoussan_estimation_2018}. 
The extended state SDE (\ref{eq: extended state SDE}) is given as follows:  
\begin{align}
	ds_{t}&=\left(A(t)s_{t}+B(t)u_{t}\right)dt+\sigma(t)d\omega_{t}\nonumber\\
	&=\left(A(t)s_{t}+\sum_{i=1}^{N}B_{i}(t)u_{t}^{i}\right)dt+\sigma(t)d\omega_{t},\label{SDE of LQG}
\end{align}
where the initial condition is given by the Gaussian distribution $p_{0}(s):=\mcal{N}\left(s\left|\mu_{0},\Sigma_{0}\right.\right)$. 
The objective function (\ref{eq: OF of GML-DSC}) is given as follows:  
\begin{align}
	&J[u]:=\mb{E}_{u}\left[\int_{0}^{T}\left(s_{t}^{T}Qs_{t}+u_{t}^{T}Ru_{t}\right)dt+s_{T}^{T}Ps_{T}\right],\label{OF of LQG}
\end{align}
where $Q(t)\succeq O$, $R(t)\succ O$, and $P\succeq O$. 
The objective of this problem is to find the optimal control function $u^{*}$ that minimizes the objective function $J[u]$. 

In this paper, we assume that $R(t)$ is the block diagonal matrix as follows: 
\begin{align}
	R(t)=\left(\begin{array}{cccc}
		R_{11}(t)&O&\cdots&O\\
		O&R_{22}(t)&\cdots&O\\
		\vdots&\vdots&\ddots&\vdots\\
		O&O&\cdots&R_{NN}(t)\\
	\end{array}\right),
	\label{block diagonal assumption}
\end{align}
where $R_{ii}(t)\succ O\in\mb{R}^{d_{u}^{i}\times d_{u}^{i}}$. 
If this assumption does not hold, the optimal control function cannot be derived explicitly. 
This problem is similar with the Witsenhausen's counterexample \cite{witsenhausen_counterexample_1968}. 

\subsection{Derivation of optimal control function}\label{sec: Decentralized Riccati equation}
In this subsection, we derive the optimal control function of the LQG problem. 
In the LQG problem, the probability density function of the extended state $s$ at time $t$ is given by the Gaussian distribution $p_{t}(s):=\mcal{N}\left(s|\mu(t),\Sigma(t)\right)$. 
Defining the stochastic extended state $\hat{s}:=s-\mu$, $\mb{E}_{p_{t}(s^{-i}|z^{i})}\left[s\right]$ is given as follows: 
\begin{align}
	\mb{E}_{p_{t}(s^{-i}|z^{i})}\left[s\right]=K_{i}(t)\hat{s}+\mu(t), 
	\label{eq: conditional mean vector of LQG}
\end{align}
where $K_{i}(t)$ is defined by 
\begin{align}
	K_{i}(t):=\left(\begin{array}{ccccc}
		O&\cdots&\Sigma_{xz^{i}}(t)\Sigma_{z^{i}z^{i}}^{-1}(t)&\cdots&O\\
		O&\cdots&\Sigma_{z^{1}z^{i}}(t)\Sigma_{z^{i}z^{i}}^{-1}(t)&\cdots&O\\
		\vdots&\ddots&\vdots&\ddots&\vdots\\
		O&\cdots&I&\cdots&O\\
		\vdots&\ddots&\vdots&\ddots&\vdots\\
		O&\cdots&\Sigma_{z^{N}z^{i}}(t)\Sigma_{z^{i}z^{i}}^{-1}(t)&\cdots&O\\
	\end{array}\right).\nonumber
\end{align}
$K_{i}(t)$ is the zero matrix except for the columns corresponding to $z^{i}$. 
By applying Theorem \ref{theo: optimal control of GML-DSC} to the LQG problem, we obtain the following theorem: 

\begin{theo}\label{theo: optimal control of LQG in ML-DSC}
In the LQG problem of ML-DSC, the optimal control function is given by
\begin{align}
	u^{i*}(t,z^{i})=-R_{ii}^{-1}B_{i}^{T}\left(\Phi K_{i}\hat{s}+\Psi\mu\right).
	\label{eq: optimal control of LQG}
\end{align}
where $K_{i}(t)$ depends on $\Sigma(t)$, and $\mu(t)$ and $\Sigma(t)$ are the solutions of the following ordinary differential equations: 
\begin{align}
	\dot{\mu}&=\left(A-BR^{-1}B^{T}\Psi\right)\mu\label{eq: ODE of mu},\\
	\dot{\Sigma}&=\sigma\sigma^{T}+\left(A-\sum_{i=1}^{N}B_{i}R_{ii}^{-1}B_{i}^{T}\Phi K_{i}\right)\Sigma\nonumber\\
	&\ \ \ \ \ \ \ \ \ +\Sigma\left(A-\sum_{i=1}^{N}B_{i}R_{ii}^{-1}B_{i}^{T}\Phi K_{i}\right)^{T},\label{eq: ODE of Sigma}
\end{align}
where $\mu(0)=\mu_{0}$ and $\Sigma(0)=\Sigma_{0}$. 
$\Psi(t)$ and $\Phi(t)$ are the solutions of the following ordinary differential equations: 
\begin{align}
	-\dot{\Psi}&=Q+A^{T}\Psi+\Psi A -\Psi BR^{-1}B^{T}\Psi,\label{eq: ODE of Psi}\\
	-\dot{\Phi}&=Q+A^{T}\Phi+\Phi A-\Phi BR^{-1}B^{T}\Phi\nonumber\\
	&+\sum_{i=1}^{N}(I-K_{i})^{T}\Phi B_{i}R_{ii}^{-1}B_{i}^{T}\Phi (I-K_{i}), \label{eq: ODE of Pi}
\end{align}
where $\Psi(T)=\Phi(T)=P$. 
\end{theo}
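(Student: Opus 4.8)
The plan is to specialize Theorem \ref{theo: optimal control of GML-DSC} to the LQG data and to exploit the Gaussianity of $p_{t}$ at every step. For this problem the Hamiltonian reads
\[
H(t,s,u,w)=s^{T}Qs+u^{T}Ru+(As+Bu)^{T}\nabla_{s}w+\tfrac{1}{2}\mathrm{tr}\!\left(\sigma\sigma^{T}\nabla_{s}^{2}w\right).
\]
Since the running and terminal costs are quadratic and $p_{t}=\mcal{N}(s|\mu,\Sigma)$, I would posit that the value function is quadratic in $(\mu,\Sigma)$, so that $w(t,s)=\delta V/\delta p(s)$ is quadratic in $s$. Computing the functional derivative and passing to the centered coordinate $\hat{s}=s-\mu$, this ansatz takes the form $w(t,s)=\hat{s}^{T}\Phi(t)\hat{s}+2\mu^{T}\Psi(t)\hat{s}+(\text{terms independent of }s)$, whence $\nabla_{s}w=2(\Phi\hat{s}+\Psi\mu)$ and $\nabla_{s}^{2}w=2\Phi$. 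The symmetric matrices $\Phi$ and $\Psi$ are precisely the objects whose dynamics must be identified.

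Next I would derive the control law (\ref{eq: optimal control of LQG}). Because $R$ is block diagonal by (\ref{block diagonal assumption}), the $u^{i}$-dependence of the conditionally averaged Hamiltonian in (\ref{eq: optimal control of GML-DSC}) is the strictly convex quadratic $u^{iT}R_{ii}u^{i}+u^{iT}B_{i}^{T}\,\mb{E}_{p_{t}(s^{-i}|z^{i})}[\nabla_{s}w]$ plus $u^{i}$-independent terms. Setting its gradient to zero gives $u^{i*}=-\tfrac{1}{2}R_{ii}^{-1}B_{i}^{T}\,\mb{E}_{p_{t}(s^{-i}|z^{i})}[\nabla_{s}w]$, and substituting $\nabla_{s}w=2(\Phi\hat{s}+\Psi\mu)$ together with the conditional-mean identity (\ref{eq: conditional mean vector of LQG}), which yields $\mb{E}_{p_{t}(s^{-i}|z^{i})}[\hat{s}]=K_{i}\hat{s}$, produces exactly $u^{i*}=-R_{ii}^{-1}B_{i}^{T}(\Phi K_{i}\hat{s}+\Psi\mu)$.

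It remains to obtain the four ordinary differential equations. The equations for $\mu$ and $\Sigma$ follow from the FP equation (\ref{eq: FP eq}): for a Gaussian with linear drift $As+Bu^{*}$ and constant diffusion $\sigma\sigma^{T}$, the mean obeys $\dot{\mu}=\mb{E}_{p_{t}}[As+Bu^{*}]$ and the covariance obeys a Lyapunov-type equation driven by the closed-loop fluctuation matrix. Taking expectations annihilates the $\hat{s}$-terms of $u^{*}$ (leaving only $\Psi\mu$) and gives (\ref{eq: ODE of mu}), while the fluctuation drift $A-\sum_{i}B_{i}R_{ii}^{-1}B_{i}^{T}\Phi K_{i}$ gives (\ref{eq: ODE of Sigma}). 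The equations for $\Phi$ and $\Psi$ come from substituting the ansatz and $u^{*}$ into the HJB equation (\ref{eq: HJB eq}) and matching coefficients after writing $s=\hat{s}+\mu$: collecting the purely quadratic $\hat{s}^{T}(\cdot)\hat{s}$ part yields the decentralized Riccati equation (\ref{eq: ODE of Pi}), whereas the bilinear $\mu^{T}(\cdot)\hat{s}$ part, after using (\ref{eq: ODE of mu}), collapses to the conventional Riccati equation (\ref{eq: ODE of Psi}) for $\Psi$.

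The main obstacle is the careful bookkeeping of the conditioning matrices $K_{i}$ through the completion of squares. Writing $q:=\Phi\hat{s}+\Psi\mu$ and $a_{i}:=\Phi K_{i}\hat{s}+\Psi\mu$, the control-dependent part $u^{*T}Ru^{*}+(Bu^{*})^{T}\nabla_{s}w$ equals $\sum_{i}a_{i}^{T}B_{i}R_{ii}^{-1}B_{i}^{T}(a_{i}-2q)=\sum_{i}(a_{i}-q)^{T}B_{i}R_{ii}^{-1}B_{i}^{T}(a_{i}-q)-q^{T}BR^{-1}B^{T}q$, and the shift $a_{i}-q=-\Phi(I-K_{i})\hat{s}$ is exactly what generates the extra term $\sum_{i}(I-K_{i})^{T}\Phi B_{i}R_{ii}^{-1}B_{i}^{T}\Phi(I-K_{i})$ of (\ref{eq: ODE of Pi}); since the mean enters only through $q$, the $\mu$-part is untouched by the $K_{i}$, so $\Psi$ still obeys the undecentralized Riccati equation. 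Because $K_{i}$ depends on $\Sigma$, the resulting $\Phi$-equation is coupled backward-in-time to the forward covariance dynamics, confirming that the decentralized Riccati equation intertwines estimation and control; checking that the cross and constant terms are consistent (and that $\mb{E}[\hat{s}]=0$ removes all spurious linear-in-$\hat{s}$ contributions) completes the argument.
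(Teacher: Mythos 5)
Your proposal is correct and follows essentially the same route as the paper's Appendix B: specialize Theorem \ref{theo: optimal control of GML-DSC} to the LQG data, posit a Gaussian $p_{t}$ and a quadratic $w$, obtain $u^{i*}=-R_{ii}^{-1}B_{i}^{T}(\Phi K_{i}\hat{s}+\Psi\mu)$ from the stationary condition together with the conditional-mean identity (\ref{eq: conditional mean vector of LQG}), and substitute back into the HJB equation (\ref{eq: HJB eq}), where your completion of squares with $a_{i}-q=-\Phi(I-K_{i})\hat{s}$ is precisely how the extra term $\sum_{i}(I-K_{i})^{T}\Phi B_{i}R_{ii}^{-1}B_{i}^{T}\Phi(I-K_{i})$ of (\ref{eq: ODE of Pi}) arises. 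The only difference is cosmetic: you parametrize the ansatz in centered form $w=\hat{s}^{T}\Phi\hat{s}+2\mu^{T}\Psi\hat{s}+\mathrm{const}$, so that (\ref{eq: ODE of Psi}) falls out of the bilinear $\mu^{T}(\cdot)\hat{s}$ coefficients once $\dot{\mu}$ from (\ref{eq: ODE of mu}) is inserted into $\partial w/\partial t$ (which you handle correctly), whereas the paper writes $w=s^{T}\Phi s+\alpha^{T}s+\beta$, derives ODEs for $\Phi$, $\alpha$, $\beta$, and only afterwards reparametrizes via $\alpha=2\Upsilon\mu$ and $\Psi:=\Phi+\Upsilon$ to recover the same conventional Riccati equation.
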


\begin{proof}
The proof is shown in Appendix B.
\end{proof}

While (\ref{eq: ODE of Psi}) is the Riccati equation \cite{bensoussan_estimation_2018,charalambous_centralized_2017,charalambous_centralized_2018}, (\ref{eq: ODE of Pi}) is a new equation of ML-DSC, which is the called the decentralized Riccati equation in this paper. 
Because estimation and control are not clearly separated in the general LQG problem \cite{tottori_mean-field_2022,lessard_optimal_2012,lessard_structural_2013}, the Riccati equation (\ref{eq: ODE of Psi}) for control is modified to include estimation, which corresponds to the decentralized Riccati equation (\ref{eq: ODE of Pi}). 
As a result, the decentralized Riccati equation (\ref{eq: ODE of Pi}) may improve estimation as well as control. 

In order to support this interpretation, we analyze the decentralized Riccati equation (\ref{eq: ODE of Pi}) by comparing it with the Riccati equation (\ref{eq: ODE of Psi}). 
Since only the last term of (\ref{eq: ODE of Pi}) is different from (\ref{eq: ODE of Psi}), we denote it as follows: 
\begin{align}
	\mcal{Q}_{i}:=(I-K_{i})^{T}\Phi B_{i}R_{ii}^{-1}B_{i}^{T}\Phi (I-K_{i}). 
\end{align}
We focus on $\mcal{Q}_{N}$ for the sake of simplicity. 
Similar discussions are possible for $\mcal{Q}_{i}\ (i\in\{1,...,N-1\})$. 
We also denote $a:=s^{-N}$ and $b:=z^{N}$ for the notational simplicity. 
$a$ is unobservable and $b$ is observable for the controller $N$. 
$\mcal{Q}_{N}$ can be calculated as follows: 
\begin{align}
	\mcal{Q}_{N}=
	\left(\begin{array}{cc}
		\mcal{P}_{aa}&-\mcal{P}_{aa}\Sigma_{ab}\Sigma_{bb}^{-1}\\ 
		-\Sigma_{bb}^{-1}\Sigma_{ba}\mcal{P}_{aa}&\Sigma_{bb}^{-1}\Sigma_{ba}\mcal{P}_{aa}\Sigma_{ab}\Sigma_{bb}^{-1}\\
	\end{array}\right),
\end{align}
where $\mcal{P}_{aa}:=(\Phi B_{N}R_{NN}^{-1}B_{N}^{T}\Phi)_{aa}$. 
Because $\mcal{P}_{aa}\succeq O$ and $\Sigma_{bb}^{-1}\Sigma_{ba}\mcal{P}_{aa}\Sigma_{ab}\Sigma_{bb}^{-1}\succeq O$, 
$\Phi_{aa}$ and $\Phi_{bb}$ may be larger than $\Psi_{aa}$ and $\Psi_{bb}$, respectively. 
Because $\Phi_{aa}$ and $\Phi_{bb}$ are the negative feedback gains of $a$ and $b$, respectively, $\mcal{Q}_{N}$ may decrease $\Sigma_{aa}$ and $\Sigma_{bb}$. 
Moreover, when $\Sigma_{ab}$ is positive/negative, $\Phi_{ab}$ may be smaller/larger than $\Psi_{ab}$, which may increase/decrease $\Sigma_{ab}$. 
The similar discussion is possible for $\Sigma_{ba}$, $\Phi_{ba}$, and $\Psi_{ba}$ because $\Sigma$, $\Phi$, and $\Psi$ are symmetric matrices. 
As a result, $\mcal{Q}_{N}$ may decrease the following conditional covariance matrix: 
\begin{align}
	\Sigma_{a|b}:=\Sigma_{aa}-\Sigma_{ab}\Sigma_{bb}^{-1}\Sigma_{ba}, \label{eq: estimation error}
\end{align}
which corresponds to the estimation error of $a$ from $b$. 
Therefore, the decentralized Riccati equation (\ref{eq: ODE of Pi}) may improve estimation as well as control.

\subsection{Comparison with completely observable or memory-limited partially observable stochastic control}
In the COSC of the extended state, the optimal control function is given as follows \cite{bensoussan_estimation_2018}: 
\begin{align}
	u^{i*}(t,s)=-R_{ii}^{-1}B_{i}^{T}\left(\Psi \hat{s}+\Psi\mu\right), 
	\label{eq: optimal control of LQG COSC}
\end{align}
where $\Psi(t)$ is the solution of the Riccati equation (\ref{eq: ODE of Psi}). 

In ML-POSC, the optimal control function is given as follows \cite{tottori_mean-field_2022}: 
\begin{align}
	u^{i*}(t,z)=-R_{ii}^{-1}B_{i}^{T}\left(\Pi K\hat{s}+\Psi\mu\right),
	\label{eq: optimal control of LQG ML-POSC}
\end{align}
where $\Pi(t)$ is the solution of the partially observable Riccati equation, which is given by 
\begin{align}
	-\dot{\Pi}&=Q+A^{T}\Pi+\Pi A-\Pi BR^{-1}B^{T}\Pi\nonumber\\
	&+(I-K)^{T}\Pi BR^{-1}B^{T}\Pi (I-K), \label{eq: ODE of Phi}
\end{align}
where $\Pi(T)=P$ and $K(t)$ is defined by 
\begin{align}
	K(t):=\left(\begin{array}{cc}
		O&\Sigma_{xz}(t)\Sigma_{zz}^{-1}(t)\\
		O&I\\
	\end{array}\right).
\end{align}
The decentralized Riccati equation (\ref{eq: ODE of Pi}) is a natural extension of the partially observable Riccati equation (\ref{eq: ODE of Phi}) from a single-agent system to a multi-agent system.

\subsection{Numerical experiment}
\begin{figure*}[t]
\begin{center}
	\begin{minipage}[t][][b]{42mm}
	(a)
	\end{minipage}
	\begin{minipage}[t][][b]{42mm}
	(b)
	\end{minipage}
	\begin{minipage}[t][][b]{42mm}
	(c)
	\end{minipage}\\
	\begin{minipage}[t][][b]{42mm}
		\includegraphics[width=42mm]{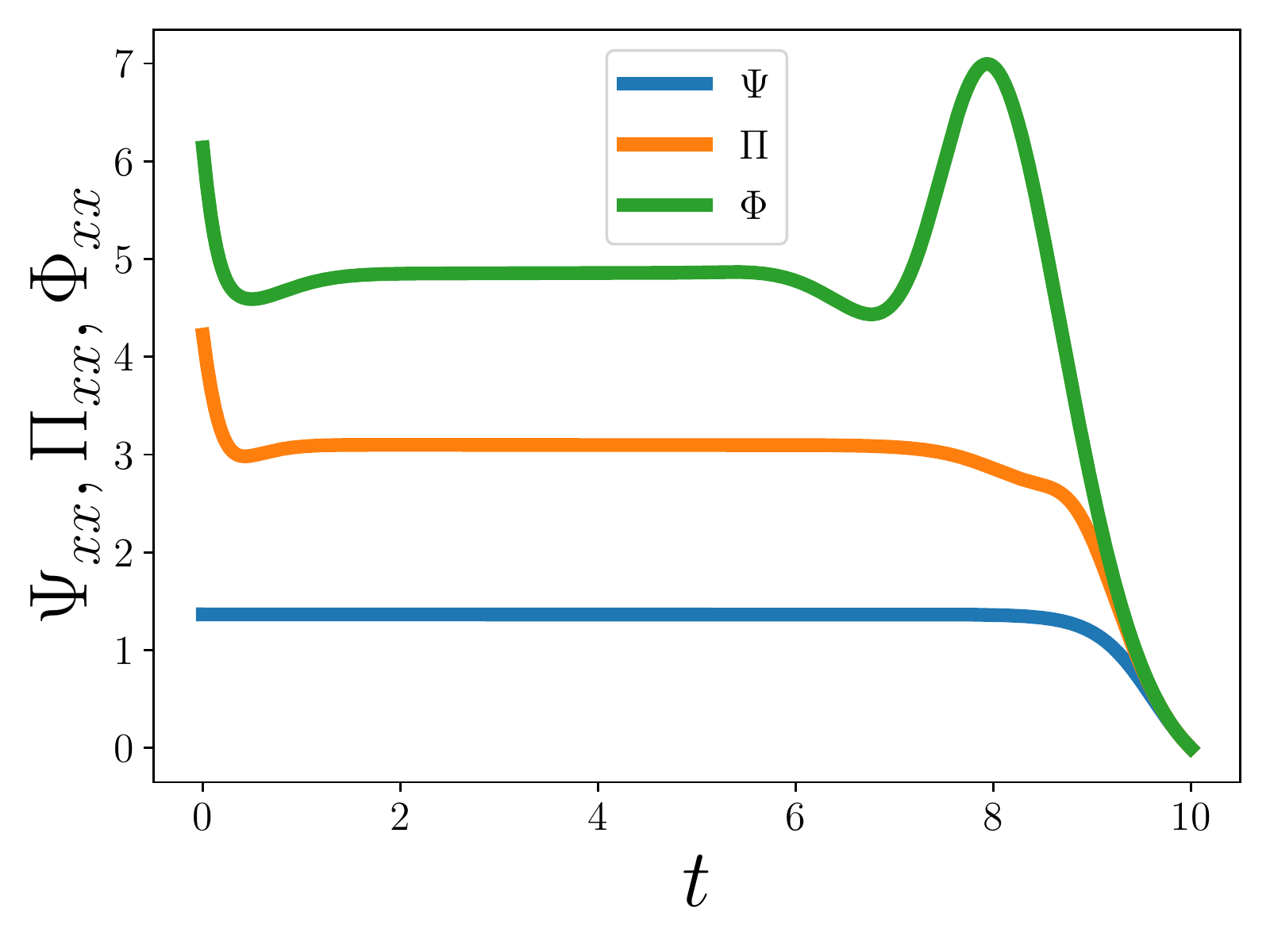}
	\end{minipage}
	\begin{minipage}[t][][b]{42mm}
		\includegraphics[width=42mm]{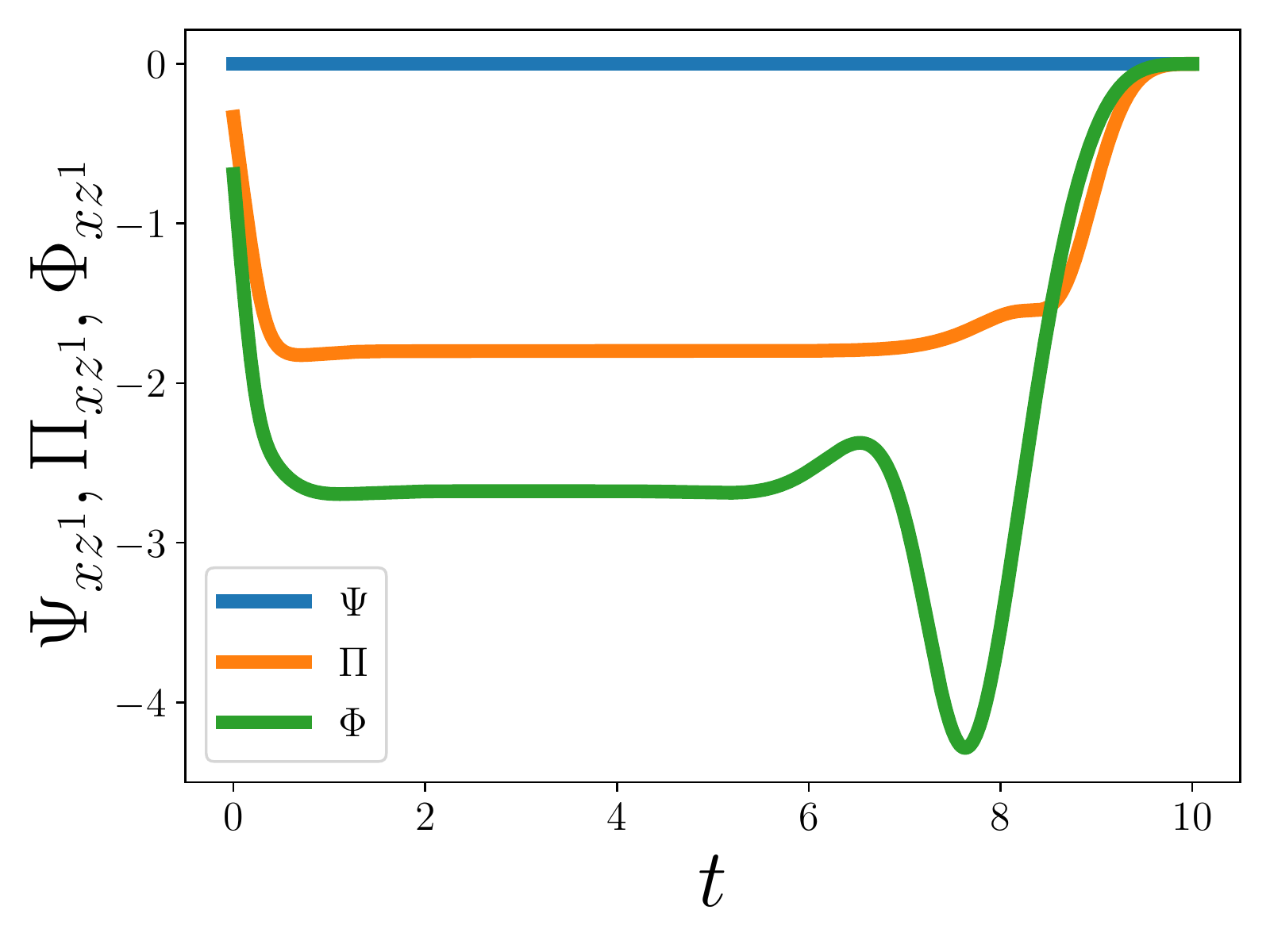}
	\end{minipage}
	\begin{minipage}[t][][b]{42mm}
		\includegraphics[width=42mm]{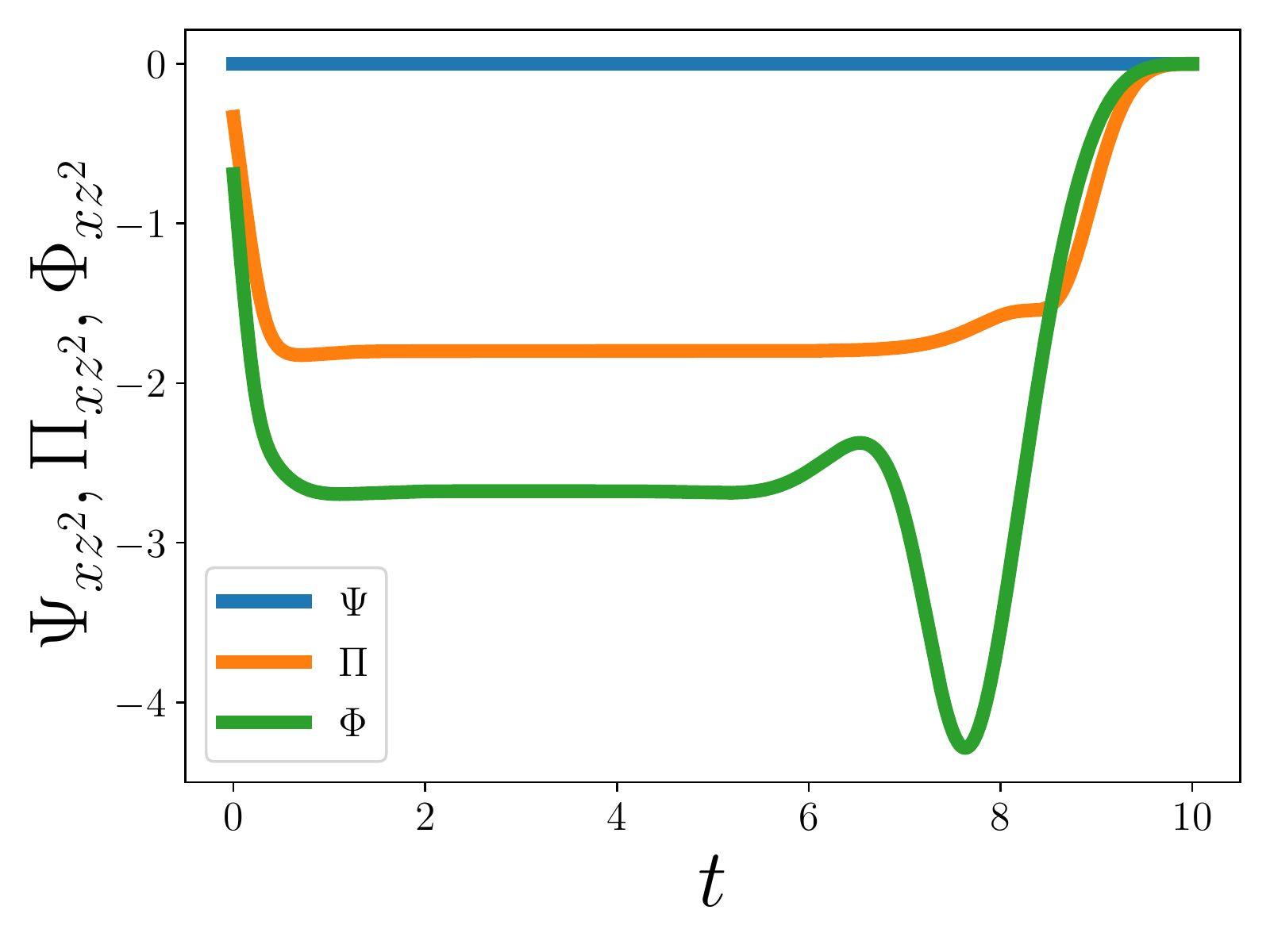}
	\end{minipage}\\
	\begin{minipage}[t][][b]{42mm}
	(d)
	\end{minipage}
	\begin{minipage}[t][][b]{42mm}
	(e)
	\end{minipage}
	\begin{minipage}[t][][b]{42mm}
	(f)
	\end{minipage}\\
	\begin{minipage}[t][][b]{42mm}
		\includegraphics[width=42mm]{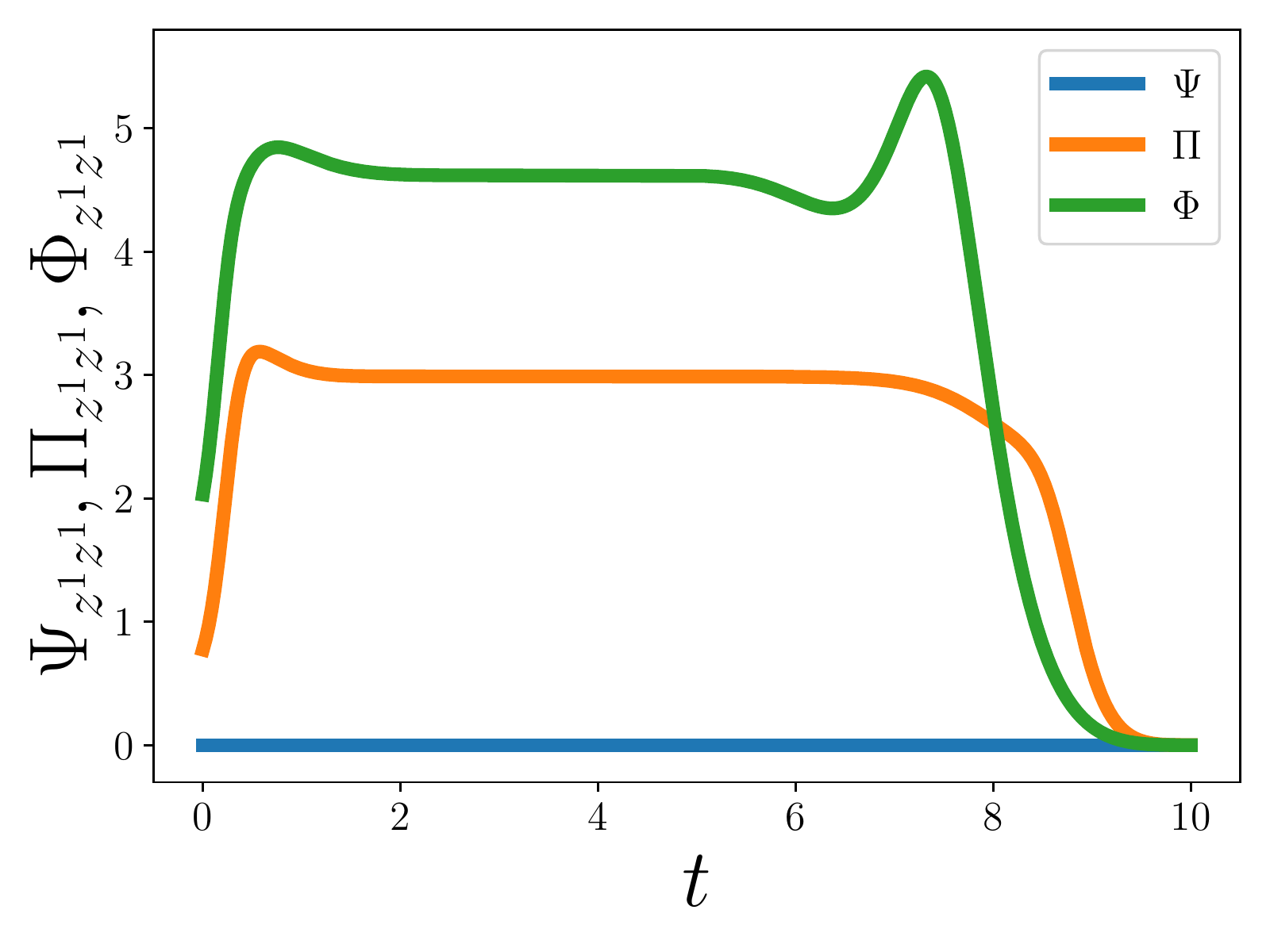}
	\end{minipage}
	\begin{minipage}[t][][b]{42mm}
		\includegraphics[width=42mm]{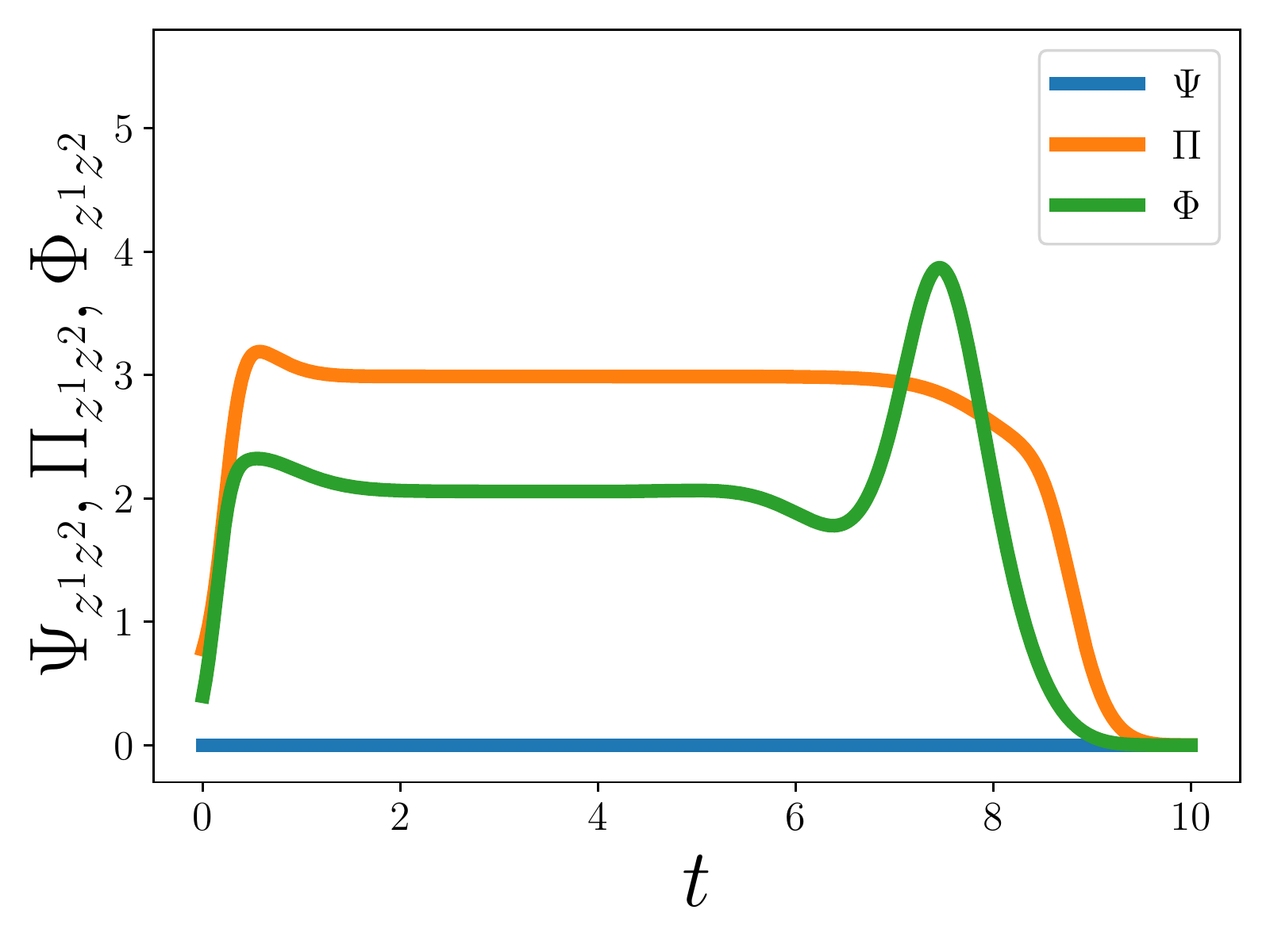}
	\end{minipage}
	\begin{minipage}[t][][b]{42mm}
		\includegraphics[width=42mm]{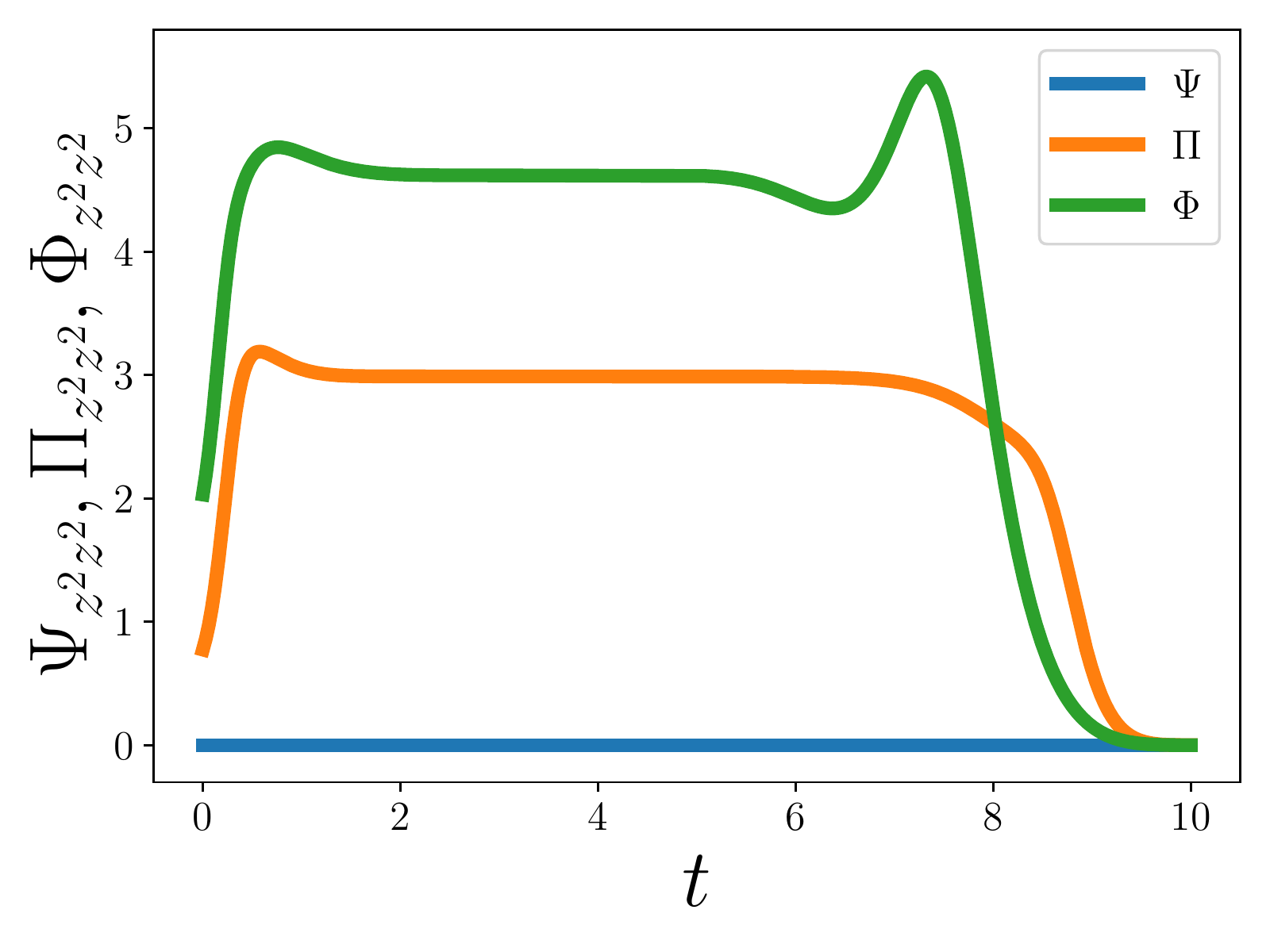}
	\end{minipage}\\
	\caption{
	Trajectories of $\Psi(t)\in\mb{R}^{3\times3}$ (blue), $\Pi(t)\in\mb{R}^{3\times3}$ (orange), and $\Phi(t)\in\mb{R}^{3\times3}$ (green), which are the solutions of (\ref{eq: ODE of Psi}), (\ref{eq: ODE of Phi}), and (\ref{eq: ODE of Pi}), respectively. 
	$\Psi(t)$, $\Pi(t)$, and $\Phi(t)$ are the optimal control gains of COSC, ML-POSC, and ML-DSC, respectively. 
	(a-f) are the elements of  $\Psi(t)$, $\Pi(t)$, and $\Phi(t)$. 
	We note that $\Psi(t)$, $\Pi(t)$, and $\Phi(t)$ are symmetric matrices. 
	}
	\label{fig: LQG ODE}
\end{center}
\end{figure*}

In this subsection, we show the significance of decentralized Riccati equation (\ref{eq: ODE of Pi}) by a numerical experiment. 
We consider the state $x_{t}\in\mb{R}$, the observation $y_{t}^{i}\in\mb{R}$ and the memory $z_{t}^{i}\in\mb{R}$ of the controller $i\in\{1,2\}$, which evolve by the following SDEs: 
\begin{align}
	dx_{t}&=\left(x_{t}+u_{t}^{1}+u_{t}^{2}\right)dt+d\omega_{t},\label{eq: state SDE LQG NE}\\
	dy_{t}^{1}&=\left(x_{t}+c_{t}^{2}\right)dt+d\nu_{t}^{1},\label{eq: observation SDE LQG NE1}\\
	dy_{t}^{2}&=\left(x_{t}+c_{t}^{1}\right)dt+d\nu_{t}^{2},\label{eq: observation SDE LQG NE2}\\
	dz_{t}^{1}&=v_{t}^{1}dt+dy_{t}^{1},\label{eq: memory SDE LQG NE1}\\
	dz_{t}^{2}&=v_{t}^{2}dt+dy_{t}^{2},\label{eq: memory SDE LQG NE2}
\end{align}
where the initial conditions are given by the standard Gaussian distributions, $\tilde{\omega}_{t}:=(\omega_{t},\nu_{t}^{1},\nu_{t}^{2})\in\mb{R}^{3}$ is the standard Wiener process, $\tilde{u}_{t}^{1}:=(u_{t}^{1},v_{t}^{1},c_{t}^{1})=\tilde{u}^{1}(t,z_{t}^{1})\in\mb{R}^{3}$ is the control of the controller 1, and $\tilde{u}_{t}^{2}:=(u_{t}^{2},c_{t}^{2},v_{t}^{2})=\tilde{u}^{2}(t,z_{t}^{2})\in\mb{R}^{3}$ is the control of the controller 2. 
Each controller can control the other controller's memory through $c_{t}^{i}$, which can be interpreted as the communication. 
The objective function to be minimized is given as follows: 
\begin{align}	
	J[\tilde{u}]:=\mb{E}_{\tilde{u}}\left[\int_{0}^{10}\left(x_{t}^{2}+(\tilde{u}_{t}^{1})^{T}\tilde{u}_{t}^{1}+(\tilde{u}_{t}^{2})^{T}\tilde{u}_{t}^{2}\right)dt\right].
	\label{eq: OF of ML-POSC LQG NE}
\end{align} 
Therefore, the objective of this problem is to minimize the state variance by the small controls. 

This problem corresponds to the LQG problem defined by (\ref{SDE of LQG}) and (\ref{OF of LQG}). 
From $s_{t}:=(x_{t},z_{t}^{1},z_{t}^{2})\in\mb{R}^{3}$, the SDEs (\ref{eq: state SDE LQG NE})--(\ref{eq: memory SDE LQG NE2}) can be rewritten as follows: 
\begin{align}
	&ds_{t}=\left(\left(\begin{array}{ccc}
		1&0&0\\
		1&0&0\\
		1&0&0\\
	\end{array}\right)s_{t}
	+\sum_{i=1}^{2}\tilde{u}_{t}^{i}\right)dt
	+d\tilde{\omega}_{t}, \nonumber
\end{align}
which corresponds to (\ref{SDE of LQG}). 
The objective function (\ref{eq: OF of ML-POSC LQG NE}) can be rewritten as follows: 
\begin{align}	
	J:=\mb{E}_{\tilde{u}}\left[\int_{0}^{10}\left(
	s_{t}^{T}\left(\begin{array}{ccc}
		1&0&0\\
		0&0&0\\
		0&0&0\\
	\end{array}\right)s_{t}
	+\sum_{i=1}^{2}(\tilde{u}_{t}^{i})^{T}\tilde{u}_{t}^{i}\right)dt\right],\nonumber
\end{align} 
which corresponds to (\ref{OF of LQG}). 
In addition, it satisfies the assumption of $R(t)$ (\ref{block diagonal assumption}). 

The Riccati (\ref{eq: ODE of Psi}) can be solved backward from the terminal condition. 
The partially observable Riccati equation (\ref{eq: ODE of Phi}) and the decentralized Riccati equation (\ref{eq: ODE of Pi}) can be solved by the forward-backward sweep method (fixed-point iteration method) \cite{lauriere_numerical_2021,tottori_notitle_2022}. 

Fig. \ref{fig: LQG ODE} shows the trajectories of $\Psi(t)$, $\Pi(t)$, and $\Phi(t)$, which are the optimal control gains of COSC, ML-POSC, and ML-DSC, respectively. 
While the memory controls do not appear in COSC, they appear in ML-POSC and ML-DSC (Fig. \ref{fig: LQG ODE}(b-f)), which indicates that the memory controls play an important role in estimation. 

We investigate $\Phi$ by comparing it with $\Psi$. 
$\Phi_{xx}$ and $\Phi_{z^{i}z^{i}}$ are larger than $\Psi_{xx}$ and $\Psi_{z^{i}z^{i}}$ (Fig. \ref{fig: LQG ODE}(a,d,f)), which may decrease $\Sigma_{xx}$ and $\Sigma_{z^{i}z^{i}}$. 
Moreover, $\Phi_{xz^{i}}$ is smaller than $\Psi_{xz^{i}}$ (Fig. \ref{fig: LQG ODE}(b,c)), which may strengthen the positive correlation between $x$ and $z^{i}$. 
Therefore, $\Phi_{xx}$, $\Phi_{z^{i}z^{i}}$, and $\Phi_{xz^{i}}$ may improve estimation, which is consistent with our discussion. 
However, $\Phi_{z^{1}z^{2}}$ is larger than $\Psi_{z^{1}z^{2}}$ (Fig. \ref{fig: LQG ODE}(e)), which may weaken the positive correlation between $z^{1}$ and $z^{2}$. 
It seems to be contrary to our discussion because it may worsen estimation. 

We compare $\Phi$ with $\Pi$ to investigate $\Phi_{z^{1}z^{2}}$. 
The absolute values of $\Phi$ are larger than those of $\Pi$ except for $\Phi_{z^{1}z^{2}}$ (Fig. \ref{fig: LQG ODE}(a,b,c,d,f)). 
This is reasonable because estimation is more important in ML-DSC than in ML-POSC. 
The problem is only $\Phi_{z^{1}z^{2}}$ (Fig. \ref{fig: LQG ODE}(e)). 
In ML-POSC, because the estimation between $z^{1}$ and $z^{2}$ is not important, $\Pi_{z^{1}z^{2}}$ is determined only from the control perspective. 
$\Pi_{z^{1}z^{2}}$ is almost the same with $\Pi_{z^{i}z^{i}}$ (Fig. \ref{fig: LQG ODE}(d,e,f)) because borrowing control is more efficient than increasing control. 
In contrast, because the estimation between $z^{1}$ and $z^{2}$ is important in ML-DSC, $\Phi_{z^{1}z^{2}}$ is smaller than $\Pi_{z^{1}z^{2}}$ (Fig. \ref{fig: LQG ODE}(e)), which may strengthen the positive correlation between $z^{1}$ and $z^{2}$. 
Therefore, $\Phi_{z^{1}z^{2}}$ is determined by a trade-off between control and estimation. 

In order to clarify the significance of the decentralized Riccati equation (\ref{eq: ODE of Pi}), 
we compare the performance of the optimal control function (\ref{eq: optimal control of LQG}) with that of the following control functions: 
\begin{align}
	&u^{i,\Psi}(t,z^{i})=-R_{ii}^{-1}B_{i}^{T} \left(\Psi K_{i}\hat{s}+\Psi\mu\right), \label{eq: Psi control of LQG}\\
	&u^{i,\Pi}(t,z^{i})=-R_{ii}^{-1}B_{i}^{T} \left(\Pi K_{i}\hat{s}+\Psi\mu\right), \label{eq: Phi control of LQG}
\end{align}
which replaces $\Phi$ with $\Psi$ and $\Pi$, respectively. 
We note that the second terms are not important because $\mu(t)=0$ in this problem. 
The result is shown in Fig. \ref{fig: LQG SDE}.  
The expected cumulative cost of (\ref{eq: Psi control of LQG}) is larger than that of (\ref{eq: optimal control of LQG}) (Fig. \ref{fig: LQG SDE}(d)) because (\ref{eq: Psi control of LQG}) does not account the estimation of the state and the other memory.  
Moreover, the expected cumulative cost of (\ref{eq: Phi control of LQG}) is larger than that of (\ref{eq: optimal control of LQG}) (Fig. \ref{fig: LQG SDE}(d)) because (\ref{eq: Phi control of LQG}) does not account the estimation of the other memory. 
These results indicate that the decentralized Riccati equation (\ref{eq: ODE of Pi}) is significant in ML-DSC. 

\begin{figure*}[t]
\begin{center}
	\begin{minipage}[t][][b]{42mm}
	(a)
	\end{minipage}
	\begin{minipage}[t][][b]{42mm}
	(b)
	\end{minipage}
	\begin{minipage}[t][][b]{42mm}
	(c)
	\end{minipage}
	\begin{minipage}[t][][b]{42mm}
	(d)
	\end{minipage}\\
	\begin{minipage}[t][][b]{42mm}
		\includegraphics[width=42mm]{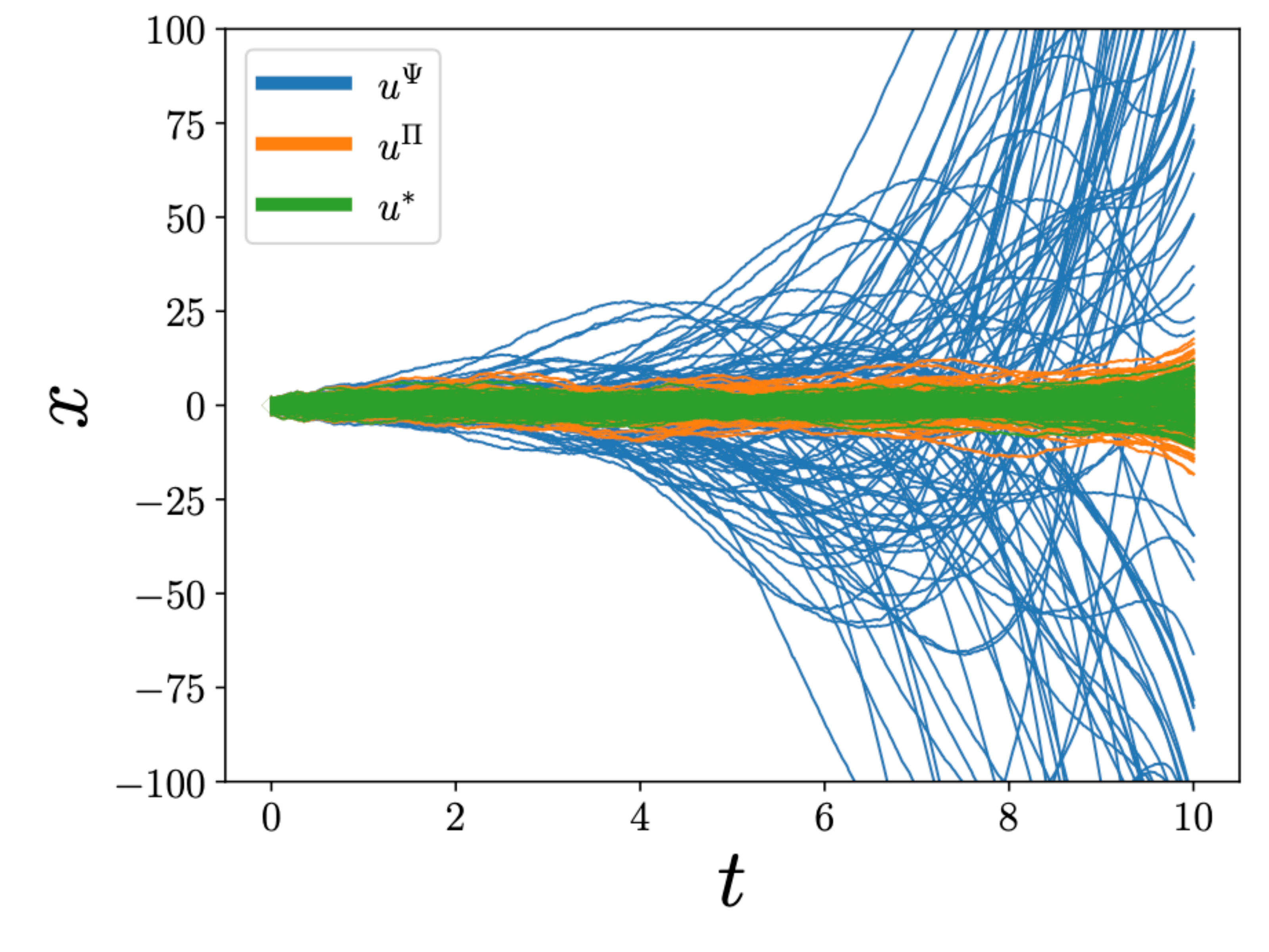}
	\end{minipage}
	\begin{minipage}[t][][b]{42mm}
		\includegraphics[width=42mm]{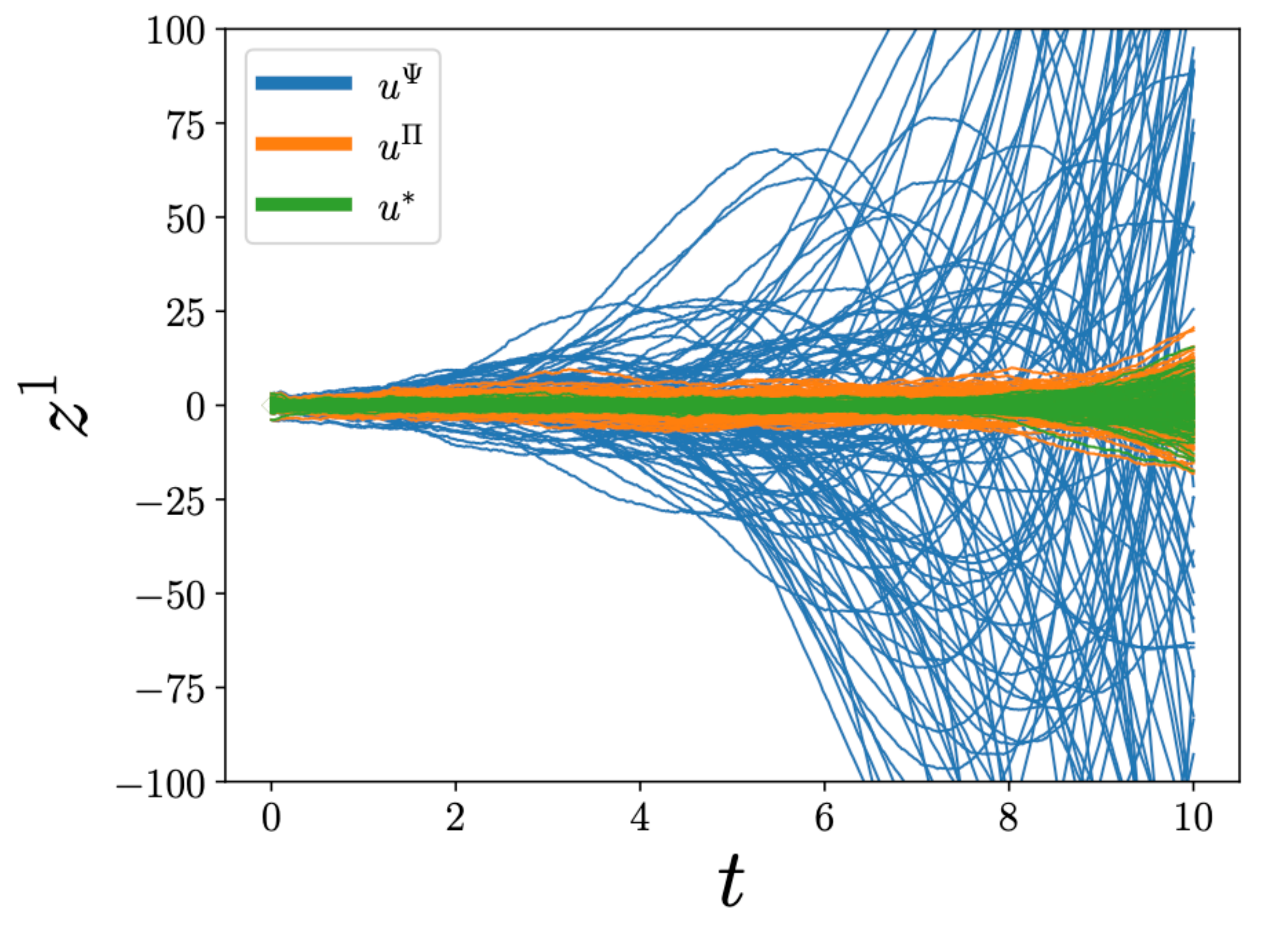}
	\end{minipage}
	\begin{minipage}[t][][b]{42mm}
		\includegraphics[width=42mm]{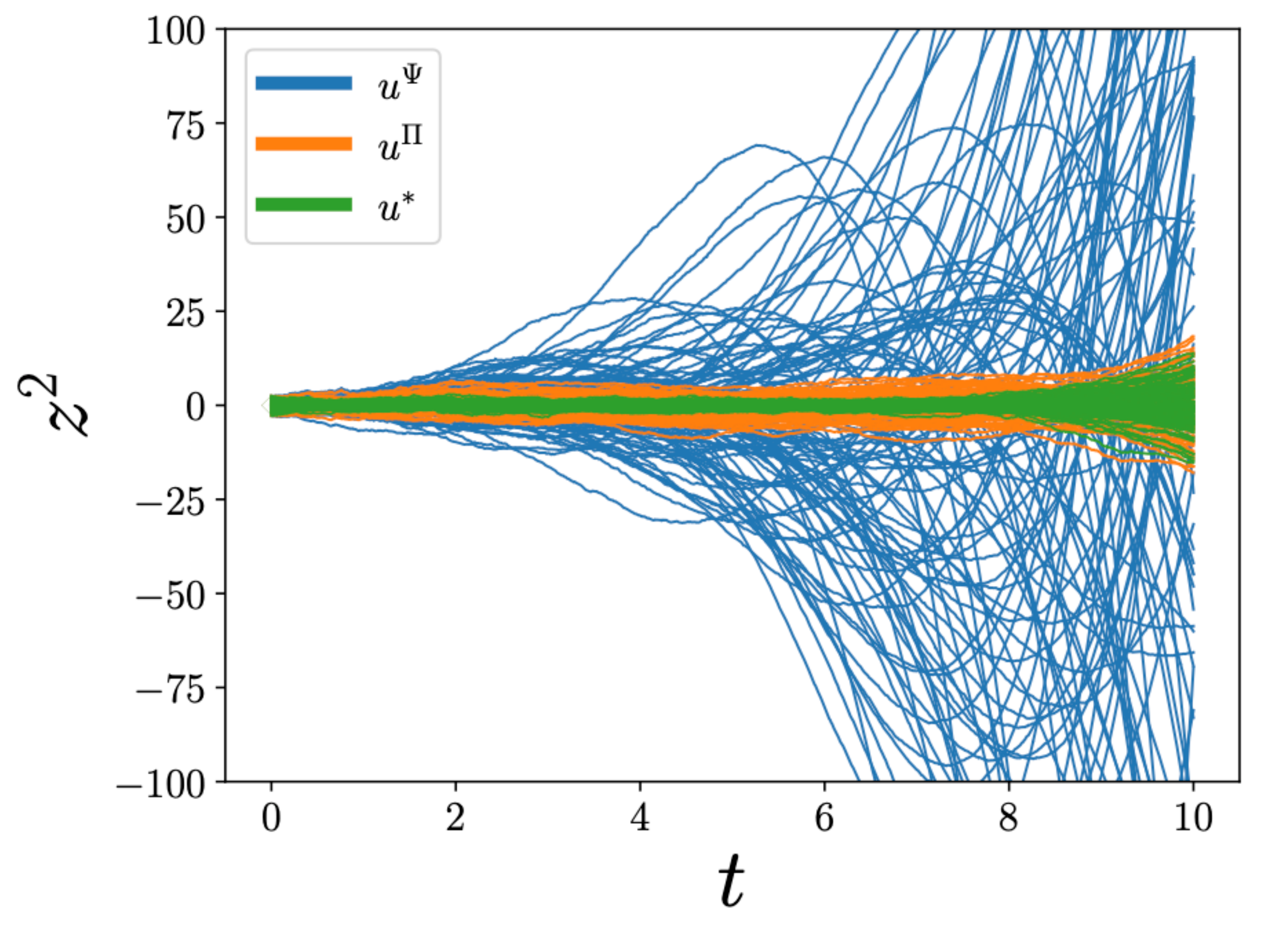}
	\end{minipage}
	\begin{minipage}[t][][b]{42mm}
		\includegraphics[width=42mm]{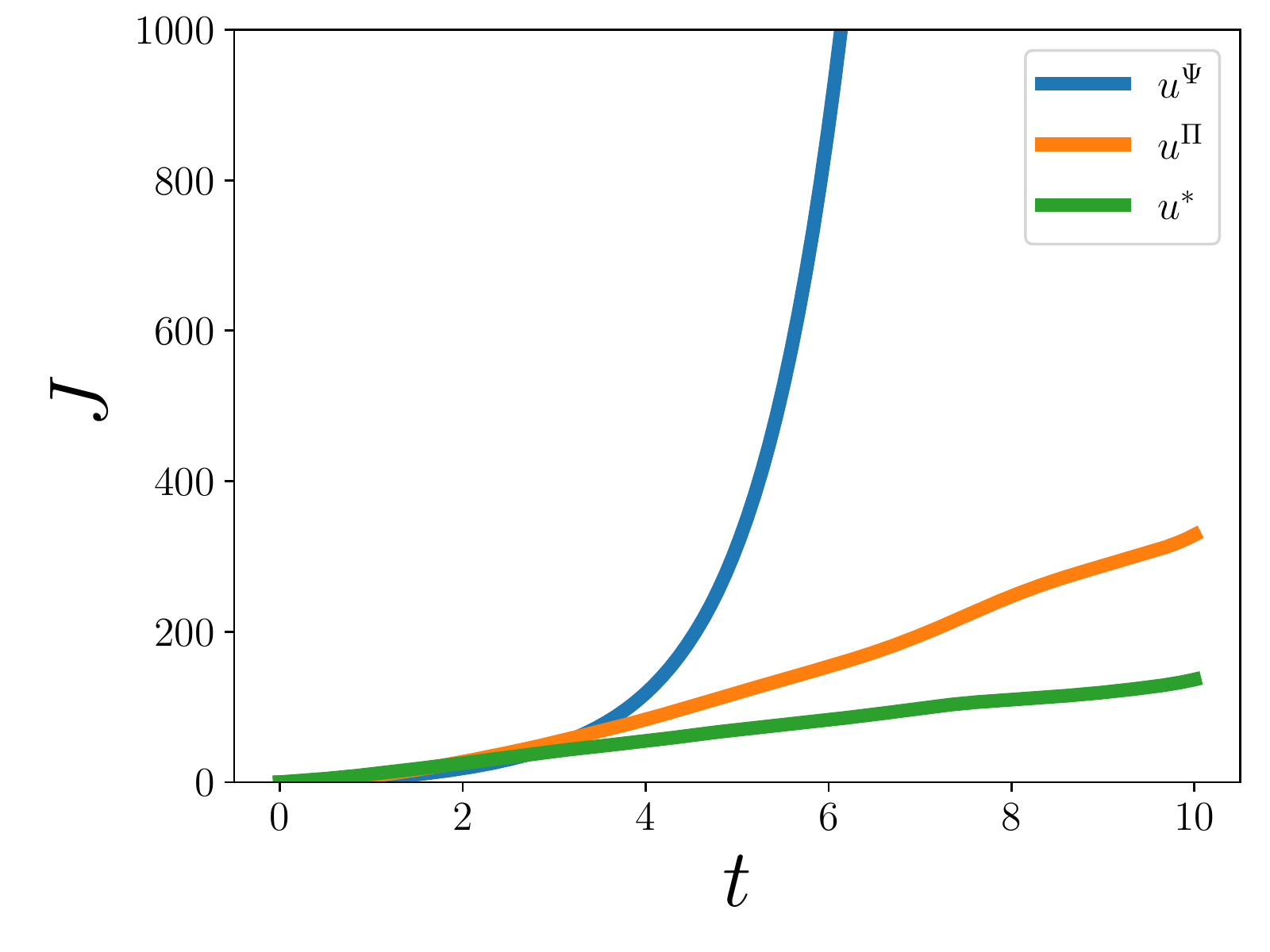}
	\end{minipage}\\
	\caption{
	(a,b,c) Stochastic behaviors of the state $x_{t}$ (a), the controller 1's memory $z_{t}^{1}$ (b), and the controller 2's memory $z_{t}^{2}$ (c) for 100 samples. 
	(d) The expected cumulative cost calculated from 100 samples. 
	Blue, orange, and green curves are controlled by $u^{\Psi}$ (\ref{eq: Psi control of LQG}), $u^{\Pi}$ (\ref{eq: Phi control of LQG}), and $u^{*}$ (\ref{eq: optimal control of LQG}), respectively. 
	}
	\label{fig: LQG SDE}
\end{center}
\end{figure*}

\section{CONCLUSION}\label{sec: Conclusion}
In this work, we proposed ML-DSC, in which each controller compresses the observation history into the finite-dimensional memory. 
Because this compression simplifies the estimation among the controllers, ML-DSC can be solved in a general case based on the mean-field control theory. 
We demonstrated ML-DSC in the general LQG problem involving a non-nested structure. 
Because estimation and control are not clearly separated in the general LQG problem, the Riccati equation is modified to the decentralized Riccati equation, which may improve estimation as well as control. 
Our numerical experiment showed that the decentralized Riccati equation is superior to the conventional Riccati equations. 

ML-DSC can be solved in practice even in a non-LQG problem. 
The optimal control function of ML-DSC is obtained by solving the system of HJB-FP equations. 
Because the system of HJB-FP equations also appears in the mean-field game and control, numerous numerical algorithms have been developed \cite{lauriere_numerical_2021}. 
Especially, neural network-based algorithms have been proposed recently, which can solve high-dimensional state problems efficiently \cite{ruthotto_machine_2020,lin_alternating_2021}. 
By exploiting these algorithms, we may efficiently solve ML-DSC consisting of a large number of agents.

\section*{APPENDIX}
\subsection{Proof of Theorem \ref{theo: optimal control of GML-DSC based on Bellman eq}}
We define the value function $V(t,p)$ as follows: 
\begin{align}
	V(t,p):=\min_{u_{t:T}}\left[\int_{t}^{T}\bar{f}(\tau,p_{\tau},u_{\tau})d\tau+\bar{g}_{T}(p_{T})\right],
\end{align}
where $\{p_{\tau}|\tau\in[t,T]\}$ is the solution of the FP equation (\ref{eq: FP eq}) where $p_{t}=p$. 
From the simple calculation \cite{tottori_mean-field_2022}, the following Bellman equation is obtained: 
\begin{align}
	-\frac{\pl V(t,p)}{\pl t}=\min_{u}\mb{E}_{p(s)}\left[H\left(t,s,u,\frac{\delta V(t,p)}{\delta p}(s)\right)\right]. \nonumber
\end{align}
Minimizing the right-hand side with respect to $u$ except for $u^{i}$, the following equation is obtained: 
\begin{align}
	-\frac{\pl V(t,p)}{\pl t}=\min_{u^{i}}\mb{E}_{p(s)}\left[H\left(t,s,(u^{-i*},u^{i}),\frac{\delta V(t,p)}{\delta p}(s)\right)\right]. \nonumber
\end{align}
Because the control $u^{i}$ is the function of the memory $z^{i}$ in ML-DSC, 
the minimization by $u^{i}$ can be exchanged with the expectation by $p(z^{i})$ as follows: 
\begin{align}
	-\frac{\pl V(t,p)}{\pl t}=\mb{E}_{p(z^{i})}\left[\min_{u^{i}}\mb{E}_{p(s^{-i}|z^{i})}\left[H\right]\right].\nonumber
\end{align}
From the optimal control theory \cite{yong_stochastic_1999}, the optimal control function is given by the right-hand side of the Bellman equation as follows:  
\begin{align}
	u^{i*}(t,z^{i},p)=\argmin_{u^{i}}\mb{E}_{p(s^{-i}|z^{i})}\left[H\right].\nonumber
\end{align}
Because the FP equation (\ref{eq: FP eq}) is deterministic, the optimal control function is given by  $u^{i*}(t,z^{i})=u^{i*}(t,z^{i},p_{t})$. 

\subsection{Proof of Theorem \ref{theo: optimal control of LQG in ML-DSC}}
In the LQG problem, the Hamiltonian is given by
\begin{align}
	&H(t,s,u,w)
	=s^{T}Qs+\sum_{i=1}^{N}(u^{i})^{T}R_{ii}u^{i}
	+\left(\frac{\pl w(t,s)}{\pl s}\right)^{T}As\nonumber\\
	&+\left(\frac{\pl w(t,s)}{\pl s}\right)^{T}\sum_{i=1}^{N}B_{i}u^{i}
	+\frac{1}{2}\tr\left\{\frac{\pl}{\pl s}\left(\frac{\pl w(t,s)}{\pl s}\right)^{T}\sigma\sigma^{T}\right\}.\nonumber
\end{align}
From Theorem \ref{theo: optimal control of GML-DSC} and the stationary condition, the optimal control function is given by 
\begin{align}
	u^{i*}(t,z^{i})
	&=-\frac{1}{2}R_{ii}^{-1}B_{i}^{T}\mb{E}_{p_{t}(s^{-i}|z^{i})}\left[\frac{\pl w(t,s)}{\pl s}\right].
	\label{eq: optimal control of LQG ver1}
\end{align}

We assume that $p_{t}(s)$ is the Gaussian distribution
\begin{align}
	p_{t}(s):=\mcal{N}\left(s|\mu(t),\Sigma(t)\right), 
	\label{eq: FP assumption LQG}
\end{align}
and $w(t,s)$ is the quadratic function
\begin{align}
	w(t,s)=s^{T}\Phi(t)s+\alpha^{T}(t)s+\beta(t). 
	\label{eq: HJB assumption LQG}
\end{align}
In this case, the optimal control function (\ref{eq: optimal control of LQG ver1}) can be calculated as follows: 
\begin{align}
	u^{i*}(t,z^{i})=-\frac{1}{2}R_{ii}^{-1}B_{i}^{T}\left(2\Phi K_{i}\hat{s}+2\Phi\mu+\alpha\right).
	\label{eq: optimal control of LQG ver3}
\end{align}
Because (\ref{eq: optimal control of LQG ver3}) is linear with respect to $\hat{s}$, 
$p_{t}(s)$ becomes the Gaussian distribution, which is consistent with our assumption (\ref{eq: FP assumption LQG}). 

Substituting (\ref{eq: HJB assumption LQG}) and (\ref{eq: optimal control of LQG ver3}) into the HJB equation (\ref{eq: HJB eq}), 
the following ordinary differential equations are obtained: 
\begin{align}
	-\dot{\Phi}&=Q+A^{T}\Phi+\Phi A-\Phi BR^{-1}B^{T}\Phi+\mcal{Q},\label{eq: ODE of Pi pre}\\
	-\dot{\alpha}&=(A-BR^{-1}B^{T}\Phi)^{T}\alpha-2\mcal{Q}\mu,\label{eq: ODE of alpha}\\
	-\dot{\beta}&=\tr(\Phi\sigma\sigma^{T})-\frac{1}{4}\alpha^{T}BR^{-1}B^{T}\alpha+\mu^{T}\mcal{Q}\mu,\label{eq: ODE of beta}
\end{align}
where $\mcal{Q}:=\sum_{i=1}^{N}(I-K_{i})^{T}\Phi B_{i}R_{ii}^{-1}B_{i}^{T}\Phi (I-K_{i})$, $\Psi(T)=O$, $\alpha(T)=0$, and $\beta(T)=0$. 
If $\Phi(t)$, $\alpha(t)$, and $\beta(t)$ satisfy (\ref{eq: ODE of Pi pre}), (\ref{eq: ODE of alpha}), and (\ref{eq: ODE of beta}), respectively, the HJB equation (\ref{eq: HJB eq}) is satisfied, which is consistent with our assumption (\ref{eq: HJB assumption LQG}). 

Defining $\Upsilon(t)$ by $\alpha(t)=2\Upsilon(t)\mu(t)$ and $\Psi(t)$ by $\Psi(t):=\Phi(t)+\Upsilon(t)$, the optimal control function (\ref{eq: optimal control of LQG ver3}) can be calculated as follows: 
\begin{align}
	u^{i*}(t,z^{i})=-R_{ii}^{-1}B_{i}^{T}\left(\Phi K_{i} \hat{s}+\Psi\mu\right). 
\end{align}
From (\ref{eq: ODE of Pi pre}) and (\ref{eq: ODE of alpha}), $\Psi(t)$ is the solution of the Riccati equation (\ref{eq: ODE of Psi}) \cite{tottori_mean-field_2022}. 


\bibliographystyle{ieeetr}
\bibliography{220531_ML-DSC_ref}

\end{document}